\definecolor{red}{rgb}{1,0,0}
\tikzstyle{vertex}=[circle, draw, inner sep=0pt, minimum size=6pt]
\tikzstyle{rvertex}=[circle, red, fill, draw, inner sep=0pt, minimum size=6pt]
\tikzstyle{gvertex}=[circle, green, fill, draw, inner sep=0pt, minimum size=6pt]
\tikzstyle{bvertex}=[circle, blue, fill, draw, inner sep=0pt, minimum size=6pt]
\tikzstyle{Bvertex}=[circle, black, fill, draw, inner sep=0pt, minimum size=6pt]
\newcommand{\vertex}{\node[vertex]}
\newcommand{\bvertex}{\node[bvertex]}
\def\noi{\noindent}
\newcommand{\ppt}{\operatorname{ppt}}
\newcommand{\ZFN}{\operatorname{Z}}
\newtheorem{thm}{Theorem}
\newtheorem{defn}[thm]{Definition}
\newtheorem{prop}[thm]{Proposition}
\newtheorem{cor}[thm]{Corollary}
\newtheorem{lem}[thm]{Lemma}
\newtheorem{conj}[thm]{Conjecture}
\newtheorem{ex}[thm]{Example}
\newtheorem{obs}[thm]{Observation}
\newtheorem{rem}[thm]{Remark}
\newcommand{\bit}{\begin{itemize}}
	\newcommand{\eit}{\end{itemize}}
\newcommand{\ben}{\begin{enumerate}}
	\newcommand{\een}{\end{enumerate}}
\newcommand{\beq}{\begin{equation}}
\newcommand{\eeq}{\end{equation}}
\newcommand{\bea}{\begin{eqnarray*}}
	\newcommand{\eea}{\end{eqnarray*}}
\newcommand{\bpf}{\begin{proof}}
	\newcommand{\epf}{\end{proof}}
\title{On the power propagation time of a graph}
\author{Chassidy Bozeman\thanks{Department of Mathematics, Iowa State University, Ames, IA 50011, USA (cbozeman@iastate.edu)}}
\date{}
\begin{document}
	
	\maketitle {}

	
	
	%
	
	
	
	
	
	
	
	%
	
	
	
	
	
	
	

	\abstract {In this paper, we give Nordhaus-Gaddum upper and lower bounds on the sum of the power propagation time of a graph and its complement, and we consider the effects of edge subdivisions and edge contractions on the power propagation time of a graph.  We also study a generalization of power propagation time, known as $k-$power propagation time, by characterizing all simple graphs on $n$ vertices whose $k-$power propagation time is $n-1$ or $n-2$ (for $k\geq 1$) and $n-3$ (for $k\geq 2$). We determine all trees on $n$ vertices  whose power propagation time ($k=1$)  is $n-3$, and give  partial characterizations of graphs whose $k-$power propagation time is equal to 1 (for $k\geq 1$). }
	
	
	\section{Introduction} Phasor Measurement Units (PMUs) are machines used by energy companies to monitor the electric power grid. They are placed at selected electrical nodes (locations at which transmission lines, loads,  and generators are connected) within the system. Due to the high cost of the machines, an extensive amount of research has been devoted to minimizing the number of PMUs needed while maintaining the ability to observe the entire system. In \cite{HHHH}, Haynes et al. studied this problems in terms of graphs. \\\indent An electric power grid is modeled by a graph by letting vertices represent the electrical nodes and edges represent transmissions lines between nodes. The {\em power domination process} is defined as follows \cite{HHHH}: A PMU placed at a vertex measures the voltage and phasor angle at that vertex, at the incident edges, and at the vertices at the endpoints of the incident edges. These vertices and edges are said to be {\em observed}. The rest of the system is observed according to the following propagation rules:
	
	\begin{enumerate}
		
		\item[1.] Any vertex that is incident to an observed edge is observed.
		
		\item[2.] Any edge joining two observed vertices is observed.
		
		\item[3.] If a vertex is incident to a total of $t>1$ edges and if $t-1$ of these edges are observed, then all $t$ of these edges are observed.
		
	\end{enumerate}
	
	Here we give an equivalent formulation of the power domination process using our notation as done in  \cite{FHKY15}. Let $G=(V,E)$ be a simple graph and $v\in V(G)$.
The set of neighbors of $v$  is  denoted $N(v)$. For a set $S$ of vertices, the {\em open neighborhood} of $S$ is given by $N(S)=\cup_{s\in S} N(s)$ and the {\em closed neighborhood} of $S$ is $N[S]:= S\cup N(S)$. Given a set $S\subseteq V(G)$, define the following sets:
	
	\begin{enumerate}
		
		\item[1.] $S^{[0]}=S, S^{[1]}=N[S]$.
		
		\item[2.]For $t\geq 1$, $S^{[t+1]}=S^{[t]}\cup \{w\in V(G)| \hspace{1mm}\exists \hspace{1mm} v\in S^{[t]}, N(v)\setminus S^{[t]}=\{w\}\}$.\end{enumerate}
	
For vertices $w$ and $v$ given in (2) we say $v$ {\em forces} $w$. A set $S$ is said to be a {\em power dominating set} if there exists an $\ell$ such that $S^{[\ell]}=V(G).$ The {\em power domination number} of $G$, denoted $\gamma_P(G)$, is the minimum cardinality over all power dominating sets of $G$. Computing $S^{[1]}$ is the {\em domination step} and the computations of $S^{[t+1]}$ (for $t\geq 1$) are the {\em propagation steps}. The authors of \cite{FHKY15} defined the power propagation time: the {\em power propagation time of $G$ with $S$}, denoted $\ppt(G,S)$, is the smallest $\ell$ such that $S^{[\ell]}=V(G)$. The {\em power propagation time of $G$}, denoted $\ppt(G)$,  is given by \[\ppt(G)=\min\{\ppt(G,S)|S \text{ is a minimum power dominating set} \}.\] A minimum power dominating  set $S$ of a graph $G$ is {\em efficient} if $\ppt(G,S)=\ppt(G)$. \\\indent In Section \ref{NGbound}, we give Nordhaus-Gaddum upper and lower bounds for the sum of the power propagation time of a graph and its complement, and in Section \ref{operations} we study the effects of edge subdivision and edge contraction on power propagation time. In Sections \ref{low}  and \ref{high}, we characterize graphs with low and high $k-$power propagation times, respectively. (Note that by letting $k=1$, we characterize graphs with low and high power propagation times.) \\\indent Power domination is closely related to the well known domination problem in graph theory. A set $S\subseteq V(G)$ is a {\em dominating set} if $N[S]=V(G)$. The {\em domination number} of a graph $G,$ denoted $\gamma(G)$,  is the minimum cardinality over all dominating sets of $G$. Note that each dominating set is a power dominating set, so $\gamma_P(G)\leq \gamma(G)$ \cite{HHHH}.

	\subsection{Zero Forcing} The zero forcing problem from combinatorial matrix theory is also closely related to power domination, and in Sections \ref{NGbound} and \ref{low} we use results from zero forcing theory to prove statements about power domination. {\em Zero forcing} is a game played on a graph using the following {\em color change rule:} Let $B$ be a set of vertices of $G$ that are colored blue with $V\setminus B$ colored white. If $v$ is a blue vertex and $u$ is the only neighbor of $v$ that is colored white, then change the color of $u$ to blue. In this case, we say $u$ forces $v$ and write $u\to v$. For a set $B$ of vertices that are initially colored blue, the set of blue vertices that results from applying the color change rule until no more color changes are possible is the {\em final coloring of $B$.} A set $B$ is said to be a {\em zero forcing set} if the final coloring of $B$ is the entire vertex set $V(G)$. The minimum cardinality over all zero forcing sets of $G$ is the {\em zero forcing number} of $G$, denoted $\ZFN(G)$. The zero forcing  number was first introduced in \cite{AIM08} as an upper bound on the linear algebraic parameter of a graph known as the maximum nullity,  and independently in \cite{physics} to study the control of quantum systems.

	\begin{obs}{\rm \cite{PD2015}}\label{NeighborhoodZFS} {\rm A set $S$ is a power dominating set of $G$ if and only if $N[S]$ is a zero forcing set of $G$. It follows that $N(S)\setminus S$ is a zero forcing set of $G\setminus S$.}
		
	\end{obs}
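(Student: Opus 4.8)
The plan is to establish the chain of equivalences
\[
S \text{ is a power dominating set of } G
\iff
N[S] \text{ is a zero forcing set of } G
\iff
N(S)\setminus S \text{ is a zero forcing set of } G-S,
\]
from which the stated biconditional (with the ``and'') follows immediately, since the latter two conditions are then individually equivalent to one another.

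For the first equivalence, I would observe that, specializing the $k$-power domination process to $k=1$, the propagation rule producing $S^{[t+1]}$ from $S^{[t]}$ is verbatim the zero forcing color change rule: a vertex $v\in S^{[t]}$ adds a neighbor $w$ precisely when $w$ is the unique neighbor of $v$ lying outside $S^{[t]}$. Hence, identifying $S^{[t]}$ with the set of blue vertices after $t-1$ color changes, the power domination process is exactly zero forcing started from the initial blue set $S^{[1]}=N[S]$. Consequently $S^{[l]}=V(G)$ for some $l$ if and only if the final coloring of $N[S]$ is all of $V(G)$, i.e., $N[S]$ is a zero forcing set of $G$.

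For the second equivalence, the key observation is that the vertices of $S$ are inert once $N[S]$ is colored blue: every neighbor of a vertex of $S$ lies in $N(S)\subseteq N[S]$, so no vertex of $S$ ever has a white neighbor, and therefore no vertex of $S$ ever performs or receives a force. Thus all forcing takes place within $V(G)\setminus S$, whose induced initial blue set is $N[S]\cap(V(G)\setminus S)=N(S)\setminus S$. I would then argue that for any vertex $v\in V(G)\setminus S$ and any coloring in which $S$ is blue, the white neighbors of $v$ in $G$ coincide with the white neighbors of $v$ in $G-S$, since deleting $S$ removes only blue neighbors of $v$. Hence the color change rule acts identically in $G$ (restricted to $V(G)\setminus S$) and in $G-S$, and a straightforward induction on the number of color changes shows that the final coloring of $N[S]$ in $G$, intersected with $V(G)\setminus S$, equals the final coloring of $N(S)\setminus S$ in $G-S$. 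Therefore $N[S]$ forces all of $V(G)$ in $G$ if and only if $N(S)\setminus S$ forces all of $V(G)\setminus S=V(G-S)$ in $G-S$.

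I expect the only delicate point to be the bookkeeping in the second equivalence: one must verify that the two processes remain in lockstep for all time, which is exactly where the inertness of $S$ and the invariance of white neighborhoods under deletion of $S$ are used (degenerate cases such as $S=V(G)$, giving the empty graph $G-S$ with empty zero forcing set, being handled trivially). Everything else is a direct translation between the two definitions.
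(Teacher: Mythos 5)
Your proof is correct, but note that the paper does not actually prove this statement --- it is stated as an Observation and attributed to \cite{PD2015} --- so there is no internal proof to compare against; your argument is the standard one. Both halves of your chain are sound: for $k=1$ the propagation rule defining $S^{[t+1]}$ from $S^{[t]}$ is verbatim the zero forcing color change rule, so after the domination step the power domination process is zero forcing started from $N[S]$; and since every neighbor of a vertex of $S$ lies in $N[S]$, the vertices of $S$ are blue from the outset, never acquire a white neighbor, and hence never perform or receive a force, so the forcing processes in $G$ (from $N[S]$) and in $G-S$ (from $N(S)\setminus S$) proceed in lockstep, which gives the second equivalence and, with it, the full biconditional. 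One small point of precision: in the power domination process all eligible forces are performed simultaneously in each round, whereas zero forcing applies forces one at a time until stalling, so your identification of $S^{[t]}$ with ``the blue set after $t-1$ color changes'' is loose; the conclusion nevertheless stands because the final coloring of a zero forcing process is independent of the order and grouping in which forces are applied, so the synchronous and asynchronous processes have the same closure.
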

	
	 The authors of \cite{proptime} introduced the propagation time of a zero forcing set of a graph. Due to the close relationship between zero forcing and power domination, many of the questions studied in this paper were motivated by results of the propagation time of a zero forcing set.
	
		\subsection{More notation and terminology}We use $P_n, C_n,$ and $K_n$ to denote the path, cycle, and complete graph on $n$ vertices, respectively. The notation $K_n-e$ represents the complete graph on $n$ vertices minus an edge, and  $K_{s,t}$ is the complete bipartite graph with bipartition $X,Y$ where $|X|=s$ and $|Y|=t$. The graph $L(s,t)$ is the lollipop graph consisting of a complete graph $K_s$ and a path on $t$ vertices where one endpoint of the path is connected to one vertex of $K_s$ via a bridge. \\\indent Let $G=(V,E)$ be a graph and $e=uv\in E(G)$. The graph resulting from {\em subdividing} the edge $e=uv$, denoted $G_e,$ is obtained from $G$ by adding a new vertex $w$ such that $V(G_e)=V(G)\cup \{w\}$ and $E(G_e)=(E(G)\setminus \{uv\})\cup \{uw, wv\}$. To {\em contract} the edge $e=uv$ is to identify vertices $u$ and $v$ as a single vertex $w$ such that $N(w)=(N(u)\cup N(v))\setminus \{u,v\}$. The graph obtained from $G$ by contracting the edge $e$ is denoted by $G/e$.  \\\indent A {\em spider} or {\em generalized star} is a tree formed from a $K_{1,n}$ (for $n\geq 3$) by subdividing any number of its edges any number of times. We use $sp(i_1,i_2,\ldots, i_n)$ to denote the spider obtained from $K_{1,n}$ by subdividing edge $e_j$ a total of $i_j-1$ times for $1\leq j\leq n$. For $G=sp(i_1,i_2,\ldots, i_n)$ and $v$ the unique vertex in $V(G)$ with degree at least 3, we say that the $n$ paths of $G-v$ are the {\em legs} of $G$. 
	\section{Preliminaries}
In this section, we give preliminary results that will be used throughout the remainder of the paper. In particular, Observation \ref{obs} and Lemma \ref{HHHHminpowerdomwithdegthree} are central. We also determine the power propagation time of several families of graphs.
	
		\begin{obs}\label{obs}{\rm
				
				Let $G$ be a graph on $n$ vertices and $S$ a power dominating set of $G$. Then,
				
				\begin{equation}
				\ppt(G,S)\leq n-|S| \\ \label{pptbound1}
				\end{equation} and
				
				\begin{equation}
				\ppt(G,S)-1\leq n-|N[S]| \\ \label{pptbound2}
				\end{equation}
			}\end{obs}
			\noi This follows from the fact that at least one vertex must be forced at each step.
			
\begin{lem}\label{HHHHminpowerdomwithdegthree} {\rm \cite{HHHH}} Let $G$ be a connected graph with $\Delta(G)\geq 3$. Then there exists a minimum power dominating set $S$ of $G$ such that $\deg(s)\geq 3$ for each $s\in S.$
\end{lem}

\subsection{Power propagation time for families} It is well known and clear that the power domination number of the graphs $P_n, C_n, K_n,$ and the spider ${\rm sp}(i_i,i_2,...,i_n)$ is 1. For $G=K_n$, any one vertex is a power dominating set with power propagation time 1. We now determine the power propagation times of the graphs $P_n, C_n,$ and ${\rm sp}(i_i,i_2,...,i_n)$.

			\begin{prop}\label{proppath}Let $P_n$ be the path on $n$ vertices. Then $\gamma_P(P_n)=1$ {\rm and} $\ppt(P_n)=\left \lfloor \frac{n}{2} \right \rfloor$. 
				
			\end{prop}

			\begin{proof}
				
				Let $G=P_n$. Any one vertex of $G$ is a minimum power dominating set. Label the vertices of $G$ with $v_1,\ldots, v_n$ where $\{v_i, v_{i+1}\}\in E(G)$ for $i\in \{1,\ldots, n-1\}$. For any vertex $v_t$, $\ppt(G, \{v_t\})=\max\{t-1, n-t\}.$  It follows that for $n$ odd, $\ppt(G)\geq \frac{n-1}{2}$, and equality is obtained by choosing the power dominating set to be $\{v_t\}$ where $t=\frac{n+1}{2}.$ For $n$ even $\ppt(G)\geq \frac{n}{2},$ and equality is obtained by choosing the power dominating set $\{v_t\}$ with $t\in \{\frac{n}{2}, \frac{n+1}{2}\}$.
			\end{proof}
			
\noindent The proofs of the next three propositions are similar and omitted. 
			
\begin{prop}\label{propcycle}Let $C_n$ be the cycle on $n$ vertices. Then $\gamma_P(C_n)=1$ {\rm and }  $\ppt(C_n)=\left \lfloor \frac{n}{2} \right \rfloor$. \end{prop}

			
\begin{prop}\label{propspider}Let $G={\rm sp}(i_1, i_2,...,i_n)$ {\rm for some} $n\geq 3$.  Then $\gamma_P(G)=1$ {\rm and }  $\ppt(G)=\max\{i_1,i_2,...,i_n\}$. \end{prop}

\begin{prop} For $s,t\geq 3, \gamma_P(K_{s,t})=2$ and $\ppt(K_{s,t})=1$, for  $s\geq 2$ and $t=2$, $\gamma_P(K_{s,t})=1$ and $\ppt(K_{s,t})=2$, and for $s\geq 1$ and $t=1$, $\gamma_P(K_{s,t})=1$ and $\ppt(K_{s,t})=1.$
\end{prop}



			\section{Nordhaus-Gaddum sum bounds for power propagation time}\label{NGbound}
			In 1956, Nordhaus and Gaddum gave upper and lower bounds on the sum and product of the chromatic number of a graph and its complement. Since then, many similar ``Nordhaus-Gaddum" bounds have been studied for other graph parameters. In particular, the Nordhaus-Gaddum sum lower bound for the zero forcing number of a graph on $n$ vertices was established in \cite{PSDZF}: $n-2\leq \ZFN(G)+\ZFN(\overline{G}).$ In this section we use this result to show that for all graphs on $n$ vertices,  $\ppt(G)+\ppt(\overline{G})\leq n+2$. We also conjecture that $n$ is the least upper bound, and demonstrate an infinite family of graphs with $\ppt(G)+\ppt(\overline{G})= n$ for each $G$ in the family. \\\indent The graph $G=K_n$ demonstrates that the Nordhaus-Gaddum sum lower bound is 1. If we require that both $G$ and its complement have edges, then the graph $G=K_{n,n}$  (for $n\geq 3$) demonstrates that Nordhaus-Gaddum sum lower bound is 2.

			\begin{prop}\label{improvedNGppt} Let $G$ be a graph on $n$ vertices. Then $\ppt(G)+\ppt(\overline{G})\leq n+2$.
				
			\end{prop}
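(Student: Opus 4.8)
The plan is to play the two propagation-time bounds of the Observation off against each other through complementation. Let $S$ be an efficient minimum power dominating set of $G$ and set $W=V(G)\setminus N_G[S]$, the set of vertices still uncolored after the domination step. By \eqref{pptbound2}, $\ppt(G)\le |W|+1=|G|-|N_G[S]|+1$. The reason to work with $W$ rather than with $\gamma_P$ is the following complementation fact: each $w\in W$ is non-adjacent in $G$ to every vertex of $S$, so in $\overline{G}$ every vertex of $W$ is adjacent to every vertex of $S$. In particular, for any fixed $s_0\in S$ we have $W\cup\{s_0\}\subseteq N_{\overline{G}}[s_0]$; that is, a single vertex of $S$ dominates all of $W$ in $\overline{G}$.

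First I would dispose of the degenerate case. If $\ppt(G)=1$ (equivalently $W=\varnothing$, i.e.\ $N_G[S]=V(G)$), then since $\ppt(\overline{G})\le |G|-\gamma_P(\overline{G})\le |G|-1$ by \eqref{pptbound1}, the sum is at most $|G|$, and symmetrically if $\ppt(\overline{G})=1$. So I may assume $\ppt(G),\ppt(\overline{G})\ge 2$, whence $W\ne\varnothing$. The main step is then to produce a \emph{minimum} power dominating set $T$ of $\overline{G}$ whose domination step already observes all of $W$, i.e.\ with $W\subseteq N_{\overline{G}}[T]$. Granting this, the sets $N_G[S]$ and $N_{\overline{G}}[T]$ cover $V(G)$ (a vertex outside $N_G[S]$ lies in $W\subseteq N_{\overline{G}}[T]$), so
\[
|N_G[S]|+|N_{\overline{G}}[T]|\ \ge\ |N_G[S]\cup N_{\overline{G}}[T]|\ =\ |G|.
\]
Applying \eqref{pptbound2} to both graphs, and using that $S$ and $T$ are minimum power dominating sets of $G$ and $\overline{G}$ respectively, then gives
\[
\ppt(G)+\ppt(\overline{G})\ \le\ \bigl(|G|-|N_G[S]|+1\bigr)+\bigl(|G|-|N_{\overline{G}}[T]|+1\bigr)\ \le\ |G|+2,
\]
which is the claim. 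Note that the two additive $+1$'s are exactly the slack separating this argument from the conjectured least upper bound $|G|$.

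The hard part will be the existence of such a $T$. Because $\ppt(\overline{G})$ is a minimum over \emph{minimum} power dominating sets, I cannot simply take $\{s_0\}$ together with whatever extra seeds are needed to dominate the rest of $\overline{G}$; that enlarged set need not be minimum, and \eqref{pptbound2} would not apply. I would attack this using the freedom still available — the choice of the efficient set $S$ of $G$, the choice of $s_0\in S$, and the choice among minimum power dominating sets of $\overline{G}$. Since $s_0$ is adjacent in $\overline{G}$ to the nonempty set $W$, it is a natural high-degree seed, and the goal is to show that $\overline{G}$ has a minimum power dominating set containing such an $s_0$ (or, more weakly, one whose domination step covers $W$). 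A promising route is to start from an arbitrary minimum power dominating set of $\overline{G}$ and exchange one vertex for $s_0$ while preserving both cardinality and the power-domination property, the latter checked through Observation \ref{NeighborhoodZFS} (membership of $N_{\overline{G}}[\,\cdot\,]$ in the zero forcing sets of $\overline{G}$).

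Finally, I would organize the verification of this covering step by a short case analysis on connectivity. If $G$ (or $\overline{G}$) is disconnected, the other graph is connected of diameter at most $2$, and a dominating/covering seed for $W$ is easy to locate directly; these cases I would settle first. The residual case — both $G$ and $\overline{G}$ connected with $\gamma_P(\overline{G})\ge 2$ — is where the exchange argument must carry the weight, and is the principal obstacle. If the exact covering lemma proves too strong in this case, a fallback is to demand only $|N_G[S]\cap N_{\overline{G}}[T]|\ge |W\setminus N_{\overline{G}}[T]|$ (which still forces $|N_G[S]|+|N_{\overline{G}}[T]|\ge|G|$), giving additional room to accommodate a few vertices of $W$ that are observed by $T$ through propagation rather than in the domination step.
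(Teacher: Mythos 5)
Your reduction is sound as far as it goes: the complementation observation (every $w\in W=V(G)\setminus N_G[S]$ is adjacent in $\overline{G}$ to every vertex of $S$, so $W\subseteq N_{\overline{G}}[s_0]$ for any $s_0\in S$), the disposal of the case $\ppt(G)=1$ or $\ppt(\overline{G})=1$ via inequality (\ref{pptbound1}), and the algebra showing that a \emph{minimum} power dominating set $T$ of $\overline{G}$ with $W\subseteq N_{\overline{G}}[T]$ would give $|N_G[S]|+|N_{\overline{G}}[T]|\geq n$ and hence the bound $n+2$ via inequality (\ref{pptbound2}), are all correct. But the proof is not complete: everything hinges on the existence of such a $T$, and you do not prove it --- you explicitly defer it to an unspecified ``exchange argument'' and call it the principal obstacle. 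That obstacle is the theorem. Minimum power dominating sets have no general exchange property: if $T'$ is a minimum power dominating set of $\overline{G}$ and you replace some $t\in T'$ by $s_0$, the resulting set typically fails to be power dominating (the deleted vertex $t$ may be the only seed observing its part of $\overline{G}$, while $s_0$ may already lie in $N_{\overline{G}}[T'\setminus\{t\}]$ and contribute nothing new), and nothing in the paper's toolkit (Remark \ref{remarkk2}, Lemma \ref{minpowerdomwithdegthree}, Lemma \ref{sizedecrease}) forces a minimum power dominating set of $\overline{G}$ to contain, or even to dominate, a prescribed set of vertices. The weaker fallback condition $|N_G[S]\cap N_{\overline{G}}[T]|\geq |W\setminus N_{\overline{G}}[T]|$ is correct algebraically but is equally unestablished.

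For contrast, the paper's proof sidesteps any such structural claim about power dominating sets of $\overline{G}$ by passing to zero forcing. If $G$ has an edge and $S$ is an efficient power dominating set, then $N[S]$ is a zero forcing set that remains a zero forcing set after deleting one neighbor $v_s$ of some $s\in S$, so $\ZFN(G)+1\leq |N[S]|$; likewise $\ZFN(\overline{G})+1\leq |N[S']|$ for an efficient $S'$ in $\overline{G}$. Inequality (\ref{pptbound2}) then gives $\ppt(G)+\ppt(\overline{G})\leq 2n-(\ZFN(G)+\ZFN(\overline{G}))$, and the heavy lifting is done by the cited Nordhaus--Gaddum bound $\ZFN(G)+\ZFN(\overline{G})\geq n-2$ of \cite{PSDZF}. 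To salvage your route you would need an actual proof of your covering lemma (or a substitute playing the role of the cited zero forcing inequality); as written, the argument has a genuine gap exactly where the difficulty lies.
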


			\begin{proof}If $G$ has no edges, then $\ppt(G)=0$ and $\ppt(\overline{G})=1$ so the claim holds. Suppose $G$ and $\overline{G}$ have an edge. Let $S$ be an efficient power dominating set of $G$. Note that $N[S]$ is a zero forcing set of $G$, but it is not minimum: To see this, consider a fixed $s\in S$ (such that $\deg(s)\geq 1$) and a vertex $v_s\in N(s)$. By removing $v_s$, $N[S]\setminus\{v_s\}$ is also a zero forcing set, so $\ZFN(G)+1\leq |N[S]|$. Similarly, $\ZFN(\overline{G})+1\leq |N[S']|$, where $S'$ is an efficient power dominating set of $\overline{G}$. It follows from inequality (\ref{pptbound2}) that $\ppt(G)+\ppt(\overline{G})\leq 2n-(\ZFN(G)+\ZFN(\overline{G})),$ and since $n-2\leq \ZFN(G)+\ZFN(\overline{G})$  (\cite{PSDZF}), then  $\ppt(G)+\ppt(\overline{G})\leq n+2$.
			\end{proof}

			We have not found a graph with $\ppt(G)+\ppt(\overline{G})=n+1$, or one such that $\ppt(G)+\ppt(\overline{G})=n+2$. We have computationally checked all connected graphs on at most 10 vertices and found several graphs with $\ppt(G)+\ppt(\overline{G})=n$. Evidence suggests that this is the least upper bound for all graphs. The next example gives an infinite family of graphs such that $\ppt(G)+\ppt(\overline{G})=n$ for all graphs in the family.

			\begin{ex}\label{examplen}{\rm Let $G_9$ denote the graph given in the Figure \ref{pptn}. For $n\geq 10$, let $G_n$ be a graph on $n$ vertices constructed from $G_{n-1}$ by adding an $n^{th}$ vertex and adding the edges $\{v_{n-2},v_n\}$ and $\{v_{n-1}, v_n\}.$  Note that the set $V(G_n)\setminus \{v_2,v_3\}$ is not a zero forcing set of $G_n$ (since $N(v_2)=N(v_3), $ $v_2$ and $v_3$ will never be forced). So for every power dominating set $S$ of $G_n$, $N[S]$ must contain either $v_2$ or $v_3$. Also note that the sets $\{v_2\}$ and $\{v_3\}$ are minimum power dominating sets of $G_n$ with $\ppt(G_n, v_2)=\ppt(G_n,v_3)=n-3$. Thus, $\gamma_P(G)=1$. For $6\leq i \leq n$, the set $\{v_i\}$ is not a power dominating set since $v_2, v_3\notin N[\{v_i\}]$. Furthermore, it follows from inspection that the sets $\{v_1\}, \{v_4\}, \text{ and } \{v_5\}$ are not power dominating sets. Thus, $\ppt(G_n)=n-3$. \\\indent Similarly, we show that $\ppt(\overline{G_n})=3$. The sets $\{v_{n-1}\}$ and $\{v_n\}$ are power dominating sets of $\overline{G_n}$ with $\ppt(\overline{G_n}, \{v_{n-1}\})=\ppt(\overline{G_n}, \{v_n\})=3,$ and the sets $\{v_2\}$ and $\{v_3\}$ are power dominating sets with $\ppt(\overline{G_n}, \{v_2\})=\ppt(\overline{G_n}, \{v_3\})=4.$ Since $N(v_2)\setminus \{v_3\}=N(v_3)\setminus \{v_2\}$, the set $V(\overline{G_n})\setminus \{v_2,v_3\}$ is not a zero forcing set of $\overline{G_n}.$ So for each power dominating set $S'$ of $\overline{G_n},$ $N[S']$ must contain $v_2$ or $v_3$. It follows that for $i\in \{1,4,5\}$, the set $\{v_i\}$ is not a power dominating set since $v_2, v_3 \notin N[\{v_i\}].$ We now show that for $6\leq i\leq n-2,\{v_i\}$ is not a power dominating set by showing that $N[\{v_i\}]$ is not a zero forcing set. Note that $N[\{v_i\}]=V(\overline{G_n})\setminus\{v_{i-2}, v_{i-1}, v_{i+1}, v_{i+2}\}.$ If $j<i-2, v_j$ is adjacent to $v_{i+1}$ and $v_{i+2}$ (since $v_j$ is not adjacent to $v_{i+1}$ and $v_{i+2}$ in $G_n$). If $j>i+2, v_j$ is adjacent to $v_{i-2}$ and $v_{i-1}$. Thus, no vertex in $N[\{v_i\}]$ is able to perform a force, so $N[\{v_i\}]$ is not a zero forcing set. This shows that $\ppt(\overline{G_n})=3$, so $\ppt(G_n)+\ppt(\overline{G_n})=n$.

					\begin{figure}[h!]
						\begin{center}
							\begin{tikzpicture}[scale=1.0]
							
							
							\vertex [label=above:$v_1$] (1) at (-1,0) {};
							\vertex [label=above:$v_3$] (3) at (0,0) {};
							\vertex [label=above:$v_5$] (5) at (1,0) {};
							\vertex [label=above:$v_7$](7) at (2,0) {};
							\vertex [label=above:$v_9$](9) at (3,0) {};
							\vertex (2)[label=below:$v_2$] at (-0.5,-1) {};
							\vertex (4)[label=below:$v_4$] at (0.5,-1) {};
							\vertex (6)[label=below:$v_6$] at (1.5,-1) {};
							\vertex (8)[label=below:$v_8$] at (2.5,-1) {};
							
							\draw (1) to (3);
							\draw (1) to (2);
							\draw (2) to (5);
							\draw (3) to (4);
							\draw (3) to (5);
							\draw (5) to (7);
							\draw (7) to (9);
							\draw (2) to (4);
							\draw (4) to (6);
							\draw (6) to (8);
							\draw (8) to (9);
							\draw (5) to (6);
							\draw (6) to (7);
							\draw (7) to (8);
							\draw (8) to (9);
							

							\vertex [label=above:$v_9$] (19) at (5,1) {};
							\vertex [label=left:$v_8$] (18) at (4.5,0) {};
							\vertex [label=left:$v_7$] (17) at (4.5 ,-1) {};
							\vertex [label=left:$v_6$](16) at (5,-2) {};
							\vertex [label=below:$v_5$](15) at (6.2,-2.3) {};
							\vertex (14)[label=below:$v_4$] at (7.2,-1.6) {};
							\vertex (11)[label=above:$v_1$] at (6.2,1.3) {};
							\vertex (12)[label=right:$v_2$] at (7.2, 0.8) {};
							\vertex (13)[label=right:$v_3$] at (7.5, -0.5) {};
							
							\draw(11) to (14);
							\draw(11) to (15);
							\draw(11) to (16);
							\draw(11) to (17);
							\draw(11) to (18);
							\draw(11) to (19);
							
							\draw(12) to (13);
							\draw(12) to (16);
							\draw(12) to (17);
							\draw(12) to (18);
							\draw(12) to (19);
							
							\draw(13) to (16);
							\draw(13) to (17);
							\draw(13) to (18);
							\draw(13) to (19);
							
							\draw(14) to (15);
							\draw(14) to (17);
							\draw(14) to (18);
							\draw(14) to (19);
							
							\draw(15) to (18);
							\draw(15) to (19);
							
							\draw(16) to (19);

							\end{tikzpicture}
							\caption{Graphs $G_9$ (left) and $\overline{G_9}$ (right) in Example \ref{examplen}.}
							\label{pptn}
						\end{center}
					\end{figure}
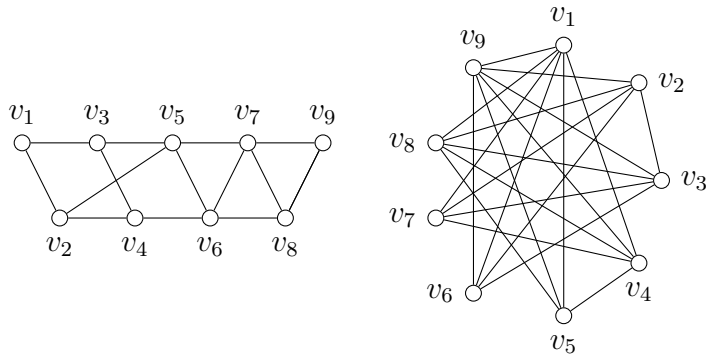
					
				}
				
			\end{ex}
			\begin{conj}\label{ng conjecture} For all graphs $G$ on $n$ vertices, $\ppt(G)+\ppt(\overline{G})\leq n$.
			\end{conj}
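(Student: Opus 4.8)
The plan is to sharpen the proof of Proposition \ref{improvedNGppt}, which already yields $n+2$ and loses exactly two units of slack. That argument combines three facts: inequality (\ref{pptbound2}) gives $\ppt(H) \le 1 + (|H| - |N[S]|)$ for an efficient power dominating set $S$; the set $N[S]$ is a zero forcing set from which one neighbor of a seed can be deleted, so $|N[S]| \ge \ZFN(H) + 1$; and $\ZFN(G) + \ZFN(\overline{G}) \ge n-2$ from \cite{PSDZF}. To reach $n$ it suffices to improve the deletion step from one vertex to two. Concretely, I would prove a deletion lemma: every connected graph $H$ on at least three vertices with an edge admits an efficient power dominating set $S$ with $|N[S]| \ge \ZFN(H) + 2$. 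Applying this to both $H=G$ and $H=\overline{G}$ (disconnected graphs handled componentwise, using that at most one of $G,\overline{G}$ is disconnected) would give
\[
\ppt(G)+\ppt(\overline{G}) \le 2n - \big(\ZFN(G)+\ZFN(\overline{G})\big) - 2 \le 2n - (n-2) - 2 = n,
\]
with the edgeless case handled exactly as in Proposition \ref{improvedNGppt}.

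The first concrete step is to establish this deletion lemma, i.e. to find two vertices $x,y \in N[S]$ with $N[S]\setminus\{x,y\}$ still a zero forcing set. By Remark \ref{remarkk2} I may assume $S$ has no leaves, so every seed has degree at least two. When $\gamma_P(H)=|S|\ge 2$, I would choose distinct seeds $s_1,s_2$ and delete $v_1 \in N(s_1)\setminus N(s_2)$ and $v_2 \in N(s_2)\setminus N(s_1)$; in the forcing process started from $N[S]\setminus\{v_1,v_2\}$ each $s_i$ then has $v_i$ as its unique white neighbor, both are recovered, and $N[S]$ forces the remainder. The degenerate configurations in which no such pair exists (for instance false twins, $N(s_1)=N(s_2)$) would need a separate but routine argument, replacing one deletion by a private neighbor or by observing that twins force extra propagation rounds, which gains a unit through strictness of (\ref{pptbound2}) instead.

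The genuinely hard case, and the reason this is posed as a conjecture, is $\gamma_P(H)=1$, where $S=\{s\}$. Deleting one neighbor of $s$ is recoverable, but a second deletion leaves $s$ with two white neighbors and stalls the forcing; indeed the family $G_n$ of Example \ref{examplen} shows that $|N[S]|=\ZFN+1$ can be sharp. My plan here is two-pronged. First, when $s$ has large degree I would gain from the propagation side instead: if two or more vertices are forced in any single round, then (\ref{pptbound2}) is strict for that $S$ and recovers a unit without a second deletion. Second, the residual graphs are precisely those for which \emph{both} $|N[S]|=\ZFN+1$ and $\pt(H,N[S])=|H|-|N[S]|$ are tight, simultaneously for $G$ and for $\overline{G}$; these are severely constrained, and I would attempt to show such joint tightness is impossible beyond an explicitly checkable list of small graphs.

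I expect the main obstacle to be exactly this simultaneously-tight single-seed case. Ruling it out appears to require a structural dichotomy asserting that a graph and its complement cannot both be maximally "path-like" for power propagation at once, and it is not clear that the zero-forcing Nordhaus--Gaddum bound alone is strong enough to supply the missing two units. Proving that dichotomy, or equivalently strengthening $\ZFN(G)+\ZFN(\overline{G})\ge n-2$ to $\ge n$ on the subclass where the deletion lemma yields only $+1$, is where the real difficulty lies.
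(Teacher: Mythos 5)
You should know at the outset that the paper does not prove this statement: it is posed as a conjecture, supported only by the weaker bound $\ppt(G)+\ppt(\overline{G})\leq n+2$ (Proposition \ref{improvedNGppt}), an exhaustive computation on graphs with at most $10$ vertices, the infinite family of Example \ref{examplen} attaining equality, and two partial results (graphs with a leaf, Proposition \ref{NGtrees}; and the final proposition of Section \ref{NGbound}, which gives $\ppt(G)+\ppt(\overline{G})\leq n-(\gamma_P(G)+\gamma_P(\overline{G}))+4$ when $\Delta(G)\geq 3$ and $\Delta(\overline{G})\geq 3$). So there is no proof in the paper to compare against, and your proposal is not one either: by your own account it is a program whose decisive case is left open.

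Two concrete problems. First, your ``deletion lemma'' is false as stated: for any cycle $C_n$ one has $\gamma_P(C_n)=1$, $|N[S]|=3$ for every minimum power dominating set $S$, and $\ZFN(C_n)=2$, so $|N[S]|=\ZFN(C_n)+1$ and no efficient power dominating set achieves $\ZFN(C_n)+2$; indeed you yourself note sharpness at $\ZFN+1$ (via the family $G_n$ of Example \ref{examplen}) two sentences after asserting the lemma for all connected graphs with an edge on at least three vertices. For $|S|\geq 2$ your deletion argument does work, but note that you must choose $v_i\in N(s_i)\setminus N[S\setminus\{s_i\}]$, not merely $v_i\notin N(s_{3-i})$, and Lemma \ref{sizedecrease} supplies exactly this; that case reproduces the paper's own last proposition in Section \ref{NGbound}, so nothing new is gained there. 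Second, your fallback for $\gamma_P=1$ --- harvest a unit from strictness of inequality (\ref{pptbound2}), or rule out ``joint tightness'' outside an explicitly checkable list of small graphs --- is not an argument but a restatement of the difficulty: Example \ref{examplen} exhibits an \emph{infinite} family with $\gamma_P(G_n)=\gamma_P(\overline{G_n})=1$ and $\ppt(G_n)+\ppt(\overline{G_n})=n$ exactly, so in any valid proof every intermediate inequality must be simultaneously tight on that family; there is no slack to harvest, and no finite list can contain these graphs. The single-seed case you defer is not a residual technicality --- it is the entire content of the conjecture, which remains open.
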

			
			We  now show that the conjecture is true for graphs satisfying certain conditions.

\begin{prop}\label{NGtrees} Let $G\neq P_4$ be a connected graph on $n$ vertices that has a leaf. Then $\ppt(G)+\ppt(\overline{G})\leq n-1$ and this bound is tight. For $G=P_4$, $\ppt(G)+\ppt(\overline{G})=n=4$.
\end{prop}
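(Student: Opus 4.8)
The plan is to bound the two summands separately and then add them: I would show that whenever $G$ is connected and has a leaf one has $\ppt(\overline{G})\le 2$, while simultaneously $\ppt(G)\le |G|-3$ for all such $G$ outside a short explicit list of small graphs. Adding these gives $\ppt(G)+\ppt(\overline{G})\le (|G|-3)+2=|G|-1$. The few graphs left over, together with $P_4$, are then checked by hand.

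For the complement bound, fix a leaf $v$ of $G$ and let $u$ be its unique neighbour. In $\overline{G}$ the vertex $v$ is adjacent to every vertex other than $u$, so $N_{\overline{G}}[v]=V(G)\setminus\{u\}$, and the domination step of the set $\{v\}$ leaves only $u$ uncovered. If $\deg_G(u)\le |G|-2$, then $u$ is non-isolated in $\overline{G}$, so any neighbour of $u$ forces it in one further propagation step; hence $\{v\}$ is a (necessarily minimum, as $\gamma_P(\overline{G})=1$) power dominating set of $\overline{G}$ with $\ppt(\overline{G},\{v\})=2$, giving $\ppt(\overline{G})\le 2$.

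For the graph bound, recall that $\ppt(G)\le|G|-1$ by inequality (\ref{pptbound1}), so $\ppt(G)\le |G|-3$ unless $\ppt(G)\in\{|G|-1,|G|-2\}$. By Theorem~\ref{orderminus1} the only connected graph with $\ppt(G)=|G|-1$ that has a leaf is $K_2$, and by the characterization of graphs with power propagation time $|G|-2$ the only connected ones with a leaf are $P_3$ and $P_4$. Thus for every connected $G$ with a leaf other than $K_2,P_3,P_4$ we have $\ppt(G)\le|G|-3$, and combining with the previous paragraph (applicable as soon as $\deg_G(u)\le|G|-2$) yields the claimed bound. The remaining degenerate possibility is that the chosen leaf's neighbour $u$ is universal, i.e.\ $\deg_G(u)=|G|-1$: here $\{u\}$ already dominates $G$ so $\ppt(G)=1$, while $\overline{G}=K_1\cup G'$ with $u$ isolated and $v$ dominating the connected part $G'$, so that $\{u,v\}$ is a minimum power dominating set of $\overline{G}$ with $\ppt(\overline{G})=1$ and the sum is merely $2\le|G|-1$ once $|G|\ge 4$. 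Finally $K_2$ and $P_3$ are verified directly, each giving exactly $|G|-1$ (which also proves tightness, as does $P_5$), and $P_4$, being self-complementary with $\ppt(P_4)=2$, yields the stated value $|G|=4$.

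The main obstacle is the bookkeeping around the case where $\{v\}$ fails to be a power dominating set of $\overline{G}$; this happens exactly when the support vertex $u$ is universal in $G$, which makes $\overline{G}$ disconnected with an isolated vertex, so the clean two-step argument for $\ppt(\overline{G})$ collapses and must be replaced by the direct $K_1\cup G'$ analysis. One must also check, in each subcase, that the power dominating set used is genuinely of minimum cardinality, since $\ppt$ is a minimum taken over \emph{minimum} power dominating sets; this is immediate here because every set used has size $\gamma_P$, but it is the step at which a careless choice would invalidate the bound.
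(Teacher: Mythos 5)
Your proposal is correct, and its overall architecture (bound $\ppt(\overline{G})\le 2$ using the leaf, bound $\ppt(G)\le n-3$ away from a short list of exceptions, then check the exceptions and $P_4$ by hand) matches the paper's proof. The difference lies in how $\ppt(G)\le n-3$ is obtained and how the universal-neighbour case is absorbed. The paper argues directly: if $\Delta(G)\ge 3$ it invokes Lemma~\ref{minpowerdomwithdegthree} to obtain a minimum power dominating set $S$ whose vertices all have degree at least $3$, whence $|N[S]|\ge 4$ and inequality~(\ref{pptbound2}) give $\ppt(G)\le n-3$; if $\Delta(G)\le 2$ then $G$ is a path, handled by Proposition~\ref{proppath} for $n\ge 6$ and by inspection for $P_3,P_4,P_5$. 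You instead recycle the classification theorems for extreme propagation time (Theorem~\ref{orderminus1} and the characterization of graphs with $\ppt_k(G)=|G|-2$), which collapses both the degree argument and the path analysis into the single observation that the only connected exceptions having a leaf are $K_2,P_3,P_4$; this is shorter and legitimate, since those theorems precede the proposition and your use of them (in the contrapositive) is sound. In the case $\deg(u)=n-1$ the paper keeps its decomposition intact by noting that $\{v,u\}$ is an efficient power dominating set of $\overline{G}$ with $\ppt(\overline{G})=1$, whereas you abandon the decomposition there and compute both summands directly ($\ppt(G)=1$ and $\ppt(\overline{G})=1$ via the $K_1\cup G'$ structure), which works equally well. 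Finally, for tightness the paper exhibits the infinite family $\text{sp}(1,1,t)$, $t\ge 2$ (relying on Proposition~\ref{Treeorderminus3}), so equality is attained for every order $n\ge 5$, while your witnesses $K_2$, $P_3$, $P_5$ are finitely many --- enough for the statement as written, but less informative.
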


			\begin{proof} The claim holds when $n\leq 2$, so let $n\geq 3$. We first show that $\ppt(\overline{G})\leq 2$. Let $uv\in E(G)$ such that $v$ is a leaf. If $\deg(u)= n-1$, then $\{v,u\}$ is an efficient power dominating set for $\overline{G}$ and $\ppt(\overline{G})=1$. If $\deg(u)\neq n-1$, then $\{v\}$ is an efficient power dominating set for $\overline{G},$ and $\ppt(\overline{G})=2.$  \\\indent Suppose first that $\Delta(G)\geq 3$. By Lemma \ref{HHHHminpowerdomwithdegthree}, $G$ has a minimum power dominating set $S$ such that each vertex in $S$ has degree at least 3. Then $|N[S]|\geq 4$, $\ppt(G)\leq n-3$, and $\ppt(G)+\ppt(\overline{G})\leq n-1$. If $\Delta(G)=2$, then $G$ is a path. By Proposition \ref{proppath}, $\ppt(P_n)=\left \lfloor \frac{n}{2} \right\rfloor$, so $\ppt(P_n)\leq n-3$ for all $n\geq 6.$ For $P_3,P_4,P_5$, we have by inspection that $\ppt(P_3)+\ppt(\overline{P_3})=2,\ppt(P_4)+\ppt(\overline{P_4})=4,$ and $\ppt(P_5)+\ppt(\overline{P_5})=4$. Thus,  $\ppt(G)+\ppt(\overline{G})\leq n-1$ for all graphs $G\neq P_4$ containing a leaf. The bound is tight for $G=\text{sp}(1,1,t)$ ($t\geq 2$) since $\ppt(G)=t=|G|-3$ by Proposition \ref{propspider} and $\ppt(\overline{G})=2$.
\end{proof}
			
			The {\em girth} of a graph is defined to be the length of the shortest cycle contained in the graph. If the graph is acyclic, the girth is defined to be infinity. We now show that Conjecture \ref{ng conjecture} is true for all graph with girth at least 5. 
			
			\begin{thm}\label{ngboundgirth5}
				Let $G$ be a graph on $n\geq 5$ vertices that has girth at least 5. Then $\ppt(\overline{G})\leq 3.$
			\end{thm}
			
			\begin{proof} Let $S'$ be an efficient power dominating set for $\overline{G}$. We will show that  $|N[S']|\geq n-2$. Then it follows from Observation \ref{obs} that $\ppt(\overline{G})\leq 3$. \\\indent Assume that $|N[S']|\leq n-3$, so that $V\setminus N[S'] \geq 3$. Let $u$ be in $V\setminus N[S']$ such that $u$ is forced by some $v\in N[S]\setminus S$ in step 2. Recall that in order for $v$ to force $u$ in step 2, $u$ must be the only neighbor of $v$ in $V\setminus N[S']$.  Let $x$ and $w$ be two vertices in $V\setminus N[S']$ such that $x\neq u$ and $w\neq u$. We first show that $x$ and $w$ must be adjacent. Since $G$ has no 3 cycles, then for any three vertices in $V(\overline{G})$, two of them must be adjacent. Choose $s\in S'$ such that $v\in N(s)$ (this $s$ is guaranteed since $v\in N[S']\setminus S'$). Note that $x,w\notin N(s)$, so $x$ and $w$ must be adjacent. Then the graph induced by $\{x,w,s, v\}$ is $K_2\cup K_2=\overline{C_4}$. This contradicts the hypothesis that the girth of $G$ is at least 5. So $|N[S']|\geq n-2$ and $\ppt(\overline{G})\leq 3$.
				
			\end{proof}

			\begin{cor}
			Let $G$ be a graph on $n$ vertices with girth at least 5. Then $\ppt(G)+\ppt(\overline{G})\leq n$. 
			\end{cor}
			
			\begin{proof}
				It follows from inspection that the claim holds for $n\leq 4$. Assume $n\geq 5$. By Theorem  \ref{ngboundgirth5}, $\ppt(\overline{G})\leq 3$. Suppose first that $\Delta(G)\geq 3$, and let $G_1$ be the connected component of $G$ that has a vertex of degree at least 3. Then there exists a minimum power domination set $S_1$ of $G_1$ such that each vertex in $S_1$ has degree at least 3 (Lemma \ref{HHHHminpowerdomwithdegthree}). Therefore, $|N[S_1]|\geq 4$, and for any minimum power dominating set $S$ of $G$ with $S_1\subseteq S$, $|N[S]|\geq 4$, so $\ppt(G)\leq \ppt(G,S)\leq n-3$ (Observation \ref{obs}).  This gives that $\ppt(G)+\ppt(\overline{G})\leq n.$ \\\indent If $\Delta(G)\leq 2$, then $G$ is the union of paths and cycles, and the power propagation time of $G$ is equal to the power propagation time of the path or cycle with the largest number of vertices. This component has at most $n$ vertices, so its power propagation time of this component is at most $\left \lfloor \frac{n}{2} \right \rfloor$ (Propositions \ref{proppath} and \ref{propcycle}).	It follows that $\ppt(G)\leq \left \lfloor \frac{n}{2} \right \rfloor$ , and since $n\geq 5$, $\ppt(G)+\ppt(\overline{G})\leq n$.	\end{proof}
			
			\begin{lem}\label{sizedecrease}{\rm \cite{Chang}} Let $G$ be a connected graph such that $\Delta(G)\geq 3$. Then there exists a minimum power dominating set $S$ such that each $s\in S$ has at least two neighbors which are not in $N[S\setminus \{v\}].$
				
			\end{lem}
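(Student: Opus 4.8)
The statement is cited from \cite{Chang}, but a self-contained argument runs as follows. Throughout I let $s$ play the role of the mistyped $v$, and I read the required property as the assertion that $s$ has at least two \emph{private} neighbors with respect to $S$, i.e.\ at least two neighbors lying outside $N[S\setminus\{s\}]$ (matching the private neighborhood $pn[s,S]=N[s]\setminus N[S\setminus\{s\}]$ defined above). The plan is to prove this by an extremal argument over minimum power dominating sets, refining the degree bound supplied by Lemma~\ref{minpowerdomwithdegthree}. First I would record the easy one-sided fact that every vertex of a minimum power dominating set has at least one private neighbor: if $pn[s,S]=\emptyset$ then $N[s]\subseteq N[S\setminus\{s\}]$, so $N[S]=N[S\setminus\{s\}]$, and since the power-domination process determines each $S^{[t+1]}$ from $S^{[t]}$ alone (with $S^{[1]}=N[S]$), it depends only on the initial observed set and the graph; hence the process started from $N[S\setminus\{s\}]$ also reaches all of $V$, making $S\setminus\{s\}$ a smaller power dominating set and contradicting minimality.

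Next I would set up the extremal step. Applying Lemma~\ref{minpowerdomwithdegthree} with $k=1$, I may start from a minimum power dominating set in which $\deg(s)\ge 3$ for every $s$, and among all such sets I would fix one, $S$, maximizing the bounded quantity $\sum_{s\in S}\min\{2,|pn[s,S]|\}$. Suppose for contradiction that some $s_0\in S$ has exactly one private neighbor $p$ (the case of none being excluded by the previous paragraph). Since $\deg(s_0)\ge 3$, at least two neighbors of $s_0$ lie in $W:=N[S\setminus\{s_0\}]$, so $s_0$ sits on the boundary between its private region and the territory of the remaining seeds. I would then relocate $s_0$ to a carefully chosen vertex $s_0'$ of its private region (a natural candidate is $p$, or the next vertex along a forcing chain that $s_0$ initiates), forming $S'=(S\setminus\{s_0\})\cup\{s_0'\}$. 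Because $s_0'$ is taken inside $pn[s_0,S]$, away from $W$, the aim is that the private neighbors of every other vertex of $S$ are unaffected while $s_0'$ acquires at least two private neighbors, so that the potential strictly increases.

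The main obstacle is showing that $S'$ is still a \emph{minimum} power dominating set, so that the strict increase of the potential yields the desired contradiction. The cardinality is preserved, so the delicate point is that relocating the seed from $s_0$ to $s_0'$ both creates and destroys forces, and one must verify that the process started from $N[S']$ still observes all of $V$, and dually that replacing $N[s_0]$ by $N[s_0']$ does not alter the private neighborhoods of the other seeds at the interface with $W$. I would control this by working with the maximal forcing chains issuing from $N[s_0]$, which are induced paths (as noted after Observation~\ref{NeighborhoodZFS}): the chain through $s_0$ can be \emph{re-rooted} at $s_0'$, and the hypotheses $\Delta(G)\ge 3$ and $\deg(s_0)\ge 3$ are what guarantee that $s_0'$ re-seeds the forces previously performed from $N[s_0]$ while $s_0$ itself is recovered by a force from $W$. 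Carrying out this re-rooting, and in particular checking that no coverage is lost at the boundary between $N[s_0]$ and $W$, is the technical heart of the argument; once it is in place, the extremal $S$ can contain no vertex with fewer than two private neighbors, which is exactly the assertion (and, incidentally, yields the frequently used bound $|N[S]|\ge 3|S|$, since distinct vertices of $S$ have disjoint private neighborhoods that are themselves disjoint from $S$).
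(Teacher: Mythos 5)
First, note that the paper itself offers no proof of this lemma: it is stated as a quotation from \cite{Chang}, so your argument has to stand entirely on its own, and it does not yet do so. What you have written is a plan rather than a proof: you yourself identify the re-rooting step --- showing that the relocated set $S'$ is still a minimum power dominating set \emph{and} that the exchange process terminates --- as ``the technical heart of the argument,'' and that heart is never carried out. There is also a slip in the case analysis: your first paragraph rules out $pn[s_0,S]=\emptyset$, but you then assume every remaining bad seed has \emph{exactly one private neighbor}. Since $pn[v,S]=N[v]\setminus N[S\setminus\{v\}]$ can contain $v$ itself, a seed can have nonempty private neighborhood, namely $pn[s_0,S]=\{s_0\}$, and still have \emph{zero} private neighbors; this case is handled by neither of your paragraphs, and it is precisely the case in which there is no vertex $p$ to relocate to.

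Second, the step you treat as routine is where the argument actually breaks. The extremal scheme needs the potential $\sum_{s\in S}\min\{2,|pn[s,S]|\}$ to strictly increase under the swap $S'=(S\setminus\{s_0\})\cup\{s_0'\}$ with $s_0'\in pn[s_0,S]$, and for that you assert that ``the private neighbors of every other vertex of $S$ are unaffected.'' That assertion is unjustified and false in general: $s_0'\in pn[s_0,S]$ only says $s_0'\notin N[S\setminus\{s_0\}]$; it does not prevent $s_0'$ from being adjacent to a vertex $q\in pn[u,S]$ of another seed $u$, and after the swap such a $q$ lies in $N[s_0']\subseteq N[S'\setminus\{u\}]$ and ceases to be private for $u$. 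So the potential can drop at $u$ while rising at $s_0'$ (and you never show it rises at $s_0'$ either, i.e., that $s_0'$ acquires two private neighbors with respect to $S'$); without a monotone potential the exchange process may cycle and the desired contradiction never materializes. Ironically, the part you flag as the main obstacle is the easy one: when $p$ is the unique private neighbor of $s_0$, every other neighbor of $s_0$ lies in $N[S\setminus\{s_0\}]$ and $s_0,p\in N[p]$, so $N[S']\supseteq N[S]$, and monotonicity of the propagation process with respect to its initial observed set immediately makes $S'$ a minimum power dominating set --- no forcing-chain surgery is needed. What is genuinely missing is exactly what you postponed: a quantity that provably improves at every exchange while controlling these interactions between distinct private neighborhoods.
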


			\begin{prop}\label{delta3delta3} Let $G$ and $\overline{G}$ be connected graphs on $n$ vertices such that $\Delta(G)\geq 3$ and $\Delta(\overline{G})\geq 3.$ Then $\ppt(G)+\ppt\left (\overline{G}\right)\leq n-(\gamma_P(G)+\gamma_p(\overline{G}))+4$.
			\end{prop}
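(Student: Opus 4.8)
The plan is to bound $\ppt(G)$ and $\ppt(\overline{G})$ separately by quantities involving the zero forcing number, and then combine them using the Nordhaus--Gaddum lower bound $\ZFN(G)+\ZFN(\overline{G})\geq n-2$ from \cite{PSDZF} (the same ingredient used in Proposition \ref{improvedNGppt}). Concretely, I aim to show that for any graph $H$ on $n$ vertices with $\Delta(H)\geq 3$,
\[
\ppt(H)\leq n+1-\ZFN(H)-\gamma_P(H),
\]
and then apply this to $H=G$ and $H=\overline{G}$.

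To prove this intermediate inequality, I would first invoke Lemma \ref{sizedecrease} to fix a minimum power dominating set $S$ of $H$ in which every $s\in S$ has at least two neighbors that are not adjacent to any vertex of $N[S\setminus\{s\}]$. Inequality (\ref{pptbound2}) gives $\ppt(H,S)\leq n+1-|N[S]|$, and since $S$ is a minimum power dominating set, $\ppt(H)\leq \ppt(H,S)$. Thus it remains to establish the reduction $|N[S]|\geq \ZFN(H)+\gamma_P(H)$, i.e.\ that deleting $\gamma_P(H)=|S|$ vertices from the zero forcing set $N[S]$ leaves a zero forcing set.

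The key step is this reduction argument. For each $s\in S$ choose one private neighbor $w_s$ guaranteed by Lemma \ref{sizedecrease}. Using the defining property of $w_s$ (it is adjacent to no vertex of $N[S\setminus\{s\}]$), I would first verify that the $w_s$ are pairwise distinct and disjoint from $S$, so that $B:=N[S]\setminus\{w_s:s\in S\}$ has exactly $|N[S]|-|S|$ vertices. Then I claim $B$ is a zero forcing set: color $B$ blue, so the only white vertices are the $w_s$. For each $s$, the vertex $w_s$ is the unique white neighbor of $s$, because any other $w_{s'}$ with $s'\neq s$ cannot be adjacent to $s$ (as $s\in N[S\setminus\{s'\}]$, while $w_{s'}$ is adjacent to nothing there). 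Hence every $s$ forces $w_s$ in a single step, recovering all of $N[S]$, which is a zero forcing set of $H$ by Observation \ref{NeighborhoodZFS}. Therefore $\ZFN(H)\leq |B|=|N[S]|-\gamma_P(H)$, as needed.

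Finally, applying the intermediate inequality to $G$ and to $\overline{G}$ (both have maximum degree at least $3$, so Lemma \ref{sizedecrease} applies to each) and adding,
\[
\ppt(G)+\ppt(\overline{G})\leq 2n+2-\big(\ZFN(G)+\ZFN(\overline{G})\big)-\big(\gamma_P(G)+\gamma_P(\overline{G})\big),
\]
and substituting $\ZFN(G)+\ZFN(\overline{G})\geq n-2$ yields $\ppt(G)+\ppt(\overline{G})\leq n+4-\gamma_P(G)-\gamma_P(\overline{G})$. I expect the main obstacle to be the reduction establishing $\ZFN(H)\leq |N[S]|-\gamma_P(H)$: one must use the strong-privacy hypothesis of Lemma \ref{sizedecrease} carefully to confirm both that the deleted private neighbors are distinct and lie outside $S$, and that each $s$ genuinely has its deleted neighbor as its unique white neighbor, so that the forcing recovers every vertex of $N[S]$.
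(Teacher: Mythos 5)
Your proposal is correct and follows essentially the same route as the paper: invoke Lemma \ref{sizedecrease} to pick one private neighbor $v_s$ per vertex $s$ of a minimum power dominating set $S$, show $N[S]\setminus\{v_s : s\in S\}$ is still a zero forcing set (each $s$ forces its own $v_s$), deduce $\ZFN(G)\leq |N[S]|-\gamma_P(G)$ and likewise for $\overline{G}$, and combine inequality (\ref{pptbound2}) with the bound $\ZFN(G)+\ZFN(\overline{G})\geq n-2$ from \cite{PSDZF}. Your write-up is in fact slightly more careful than the paper's, since you explicitly verify that the deleted private neighbors are distinct, lie outside $S$, and are each the unique white neighbor of their corresponding $s$.
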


			\noi {\em Proof.} By Lemma \ref{sizedecrease} and the assumption that $\Delta(G)\geq 3$, there is a minimum power dominating set  $S$ of $G$ such that each $s\in S$ has at least one neighbor not in $N[S\setminus\{s\}]$. We first show that $\ZFN(G)\leq |N[S]|-\gamma_p(G)$. Recall that $N[S]$ is a zero forcing set of $G$. For each $s\in S$, choose a $v_s\in N(s)$ such that $v_s\notin N[S\setminus\{s\}].$ Then $N[S]\setminus\{v_1,v_2,\ldots,v_{|S|}\}$ is also a zero forcing set since $s$ will force $v_s$ in step one. So,  $\ZFN(G)\leq |N[S]|-\gamma_p(G)$.\\\indent By the same argument, we have a minimum power dominating set $S'$ of $G$ such that $\ZFN(\overline{G})\leq |N[S']|-\gamma_p(\overline{G})$. Using the bounds $\ppt(G,S)-1\leq n-|N[S]|$ and $\ppt(\overline{G},S')-1\leq n-|N[S']|$ (from inequality (\ref{pptbound2})), and $n-2\leq \ZFN(G)+\ZFN(\overline{G})$ from \cite{PSDZF}, it follows that

			\begin{eqnarray*}
				\ppt(G)+\ppt(\overline{G})&\leq & \ppt(G,S)+\ppt(\overline{G},S')\\
				&\leq & 2n+2 -(|N[S]|+|N[S']|)\\
				&\leq & 2n+2-(\ZFN(G)+\ZFN(\overline{G}))-(\gamma_P(G)+\gamma_P(\overline{G}))\\
				&\leq & 2n+2-(n-2)-(\gamma_P(G)+\gamma_P(\overline{G}))\\
				&= & n-(\gamma_P(G)+\gamma_P(\overline{G}))+4. \qed
			\end{eqnarray*} 
			
			\begin{cor}
				Let $G$ and $\overline{G}$ be connected graphs on $n$ vertices with $\gamma_{P}(G)+\gamma_{P}(\overline{G})\geq 4.$ Then $ \ppt(G)+\ppt\left (\overline{G}\right)\leq n.$
			\end{cor}
		
\begin{proof}
We first show that $\Delta(G)\geq 3$ and $\Delta(\overline{G})\geq 3$. If $\Delta(G)\leq 2$ then $G$ is a cycle or a path. By the assumption that $G$ and $\overline{G}$ are connected, $G\notin \{P_2, P_3, C_3, C_4\}$. For $n\geq 4$, $\gamma_P(P_n)=\gamma_P(\overline{P_n})
=1$ and for $n\geq 5, \gamma_P(C_n)=\gamma_P(\overline{C_n})=1.$ It follows from the assumption that $\gamma_P(G)+\gamma_P(\overline{G})\geq 4$ that neither $G$ or $\overline{G}$ is a path or cycle. Thus, $\Delta(G)\geq 3$ and $\Delta(\overline{G})\geq 3$. By Proposition \ref{delta3delta3}, \[\ppt(G)+\ppt(\overline{G})\leq n-(\gamma_P(G)+\gamma_P(\overline{G}))+4\leq n.\] 

\end{proof}
			\section{Effects of edge subdivision and edge contraction on power propagation time}\label{operations}

			Let $G_e$ be a graph obtained from $G=(V,E)$ by subdividing the edge $e\in E$ and let $G/e$ denote the graph resulting from $G$ by contracting the edge $e$. It is shown in both \cite{PD2015} and \cite{arxivpaper} that $\gamma_P(G)-1 \leq\gamma_P(G/e)\leq \gamma_P(G)+1$ and in \cite{PD2015} that $\gamma_P(G) \leq\gamma_P(G_e)\leq \gamma_P(G)+1$. We show that the power propagation time may increase or decrease by any amount when subdividing or contracting an edge.

			\begin{prop}\label{subdecrease} For any $t\geq 0$, there exists a graph $G=(V,E)$ and edge $e\in E$ such that $\ppt(G_e)\leq \ppt(G)-t.$
				
			\end{prop}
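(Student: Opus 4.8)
The plan is to prove the statement by exhibiting, for each $t\geq 0$, an explicit graph $G=(V,E)$ together with a distinguished edge $e\in E$, and then computing $\ppt(G)$ and $\ppt(G_e)$ directly from their efficient minimum power dominating sets. The guiding principle is that subdivision adds a single degree-$2$ vertex and, by the cited bounds $\gamma_P(G)\leq\gamma_P(G_e)\leq\gamma_P(G)+1$, can never decrease the power domination number; so a decrease in propagation time cannot come from speeding up a fixed power dominating set (inserting a vertex only lengthens distances). Instead the engine must be a genuine change in which sets are minimum and efficient. I would therefore target a family in which subdividing $e$ raises $\gamma_P$ by one, producing an extra, well-placed seed that parallelizes an otherwise long serial forcing process.

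Concretely, I would design $G$ so that $\gamma_P(G)=1$ with an essentially unique efficient seed whose forcing proceeds as a single unidirectional chain of length about $2t$, giving $\ppt(G)\approx 2t$; this is the behavior already exhibited by the serial spiders $\mathrm{sp}(1,1,m)$, and the family should be a rigidified variant of these in which the seed is pinned at one end of a long induced path. I would then choose $e$ so that $G_e$ has $\gamma_P(G_e)=2$, with an efficient minimum power dominating set consisting of two widely separated seeds that force inward from the two ends, so that the chain is covered in roughly half the time, $\ppt(G_e)\approx t$. The proof then breaks into the following steps: (1) define the family and the marked edge; (2) determine $\gamma_P(G)$, enumerate all minimum power dominating sets of $G$, and show each of them forces slowly, so that $\ppt(G)\geq \ppt(G_e)+t$; (3) determine $\gamma_P(G_e)$, exhibit one efficient minimum power dominating set, and compute its propagation time via the definition of $S^{[t]}$; (4) subtract to obtain $\ppt(G_e)\leq \ppt(G)-t$. (The case $t=0$ is immediate, e.g. from $\ppt(P_4)=\ppt(P_5)=2$, so one only needs $t\geq 1$.)

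The hard part will be controlling how subdivision changes the collection of minimum power dominating sets: I must verify both that the chosen $e$ really forces $\gamma_P$ to increase (most edges leave $\gamma_P$, and the efficient seed, unchanged, so $e$ must be a carefully located ``critical'' edge) and that, once it does, the resulting two-seed set is genuinely faster than every minimum power dominating set of $G$. Establishing the lower bound $\ppt(G)\geq \ppt(G_e)+t$ is the delicate direction, since it is a statement about all minimum power dominating sets of $G$ simultaneously; the natural tool is the structural description of minimum power dominating sets via their induced forcing chains (each a maximal induced path, as noted after the definition of $\pt$), together with the observation that a blocking configuration at the forced end prevents any central seed from being valid. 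I expect the bulk of the work to be the closure/propagation bookkeeping needed to pin down both propagation times exactly and to confirm that no faster minimum power dominating set of $G$ has been overlooked.
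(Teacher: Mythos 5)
Your guiding principle is exactly the engine of the paper's proof: since subdivision cannot help a fixed power dominating set, the decrease must come from the subdivision raising $\gamma_P$ by one, with the extra, well-placed seed parallelizing an otherwise serial forcing process. However, your proposal stops at desiderata (``I would design $G$ so that\dots''), and in an existence proof of this kind the explicit witness family \emph{is} the proof; worse, the family you name cannot be made to work. If your ``rigidified variant of the serial spiders'' is a tree, then $\gamma_P(G)=1$ forces $G$ to be a spider (it is well known, from the paper of Haynes et al.\ cited here, that a tree has power domination number $1$ if and only if it is a generalized star), and subdividing any edge of a spider yields another spider, so $\gamma_P(G_e)=1$ and the center remains an efficient seed: no edge is ``critical.'' The obstruction is general: if the forcing from your end-pinned seed really is a single unidirectional chain, then any edge actually used for a force can never be critical, because after subdivision the chain simply forces the new degree-$2$ vertex and continues, costing one step. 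For subdivision to raise $\gamma_P$ at all, the chosen edge $e=uv$ must be a \emph{non-forcing} edge whose endpoints become observed from independent sources and are each tight, i.e.\ each has exactly one further unobserved neighbor that only it can force, so that inserting the new vertex deadlocks both sides. Your family description contains no such configuration, and producing one is where all the work lies.

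The paper's construction shows what this takes, and notably it abandons $\gamma_P(G)=1$: its $G$ is a caterpillar, a path $(v_1,\dots,v_\ell)$ with two pendant leaves at each of $v_1$ and $v_\ell$ (pinning \emph{two} seeds, so $\gamma_P(G)=2$ with unique minimum power dominating set $\{v_1,v_\ell\}$) and single pendant leaves at $v_{\ell-1}$ and $v_{\ell-2}$. Those last two leaves block the seed $v_\ell$ from ever propagating, so the forcing is one long serial chain from $v_2$ and $\ppt(G)=\ell-2$; the edge $e=v_{\ell-2}v_{\ell-1}$ is precisely a non-forcing edge between two tight, independently observed vertices, so subdividing it deadlocks the process, $\gamma_P(G_e)=3$, and the forced third seed yields $\ppt\bigl(G_e,\{v_1,v_{\ell-2},v_\ell\}\bigr)=\left\lceil \frac{\ell-4}{2}\right\rceil$, two chains running toward each other. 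A $\gamma_P=1\to 2$ version in your spirit is possible, but only outside the class of trees (e.g.\ by giving the seed a dominated ``shortcut'' neighbor adjacent both to an early chain vertex and to a vertex far down the chain), and you would still owe the verification that every minimum power dominating set of $G$ is slow while some minimum one of $G_e$ is fast. As it stands, you have identified the correct mechanism but not produced the witness graphs, and the specific family you point to provably cannot supply them.
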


			\begin{proof} Construct the graph $G$ in the following way: Starting with the path $P_{\ell}=(v_1,v_2,\ldots, v_{\ell}),$ $(\ell\geq 7)$, add three leaves to vertex $v_1$ and add three leaves to vertex $v_{\ell}$. Add one leaf to vertex $v_{\ell-1}$ and add one leaf to vertex $v_{\ell-2}$. (See Figure \ref{subdecreasefig}.) Then $\{v_1,v_{\ell}\}$ is the unique efficient power dominating set of $G$ and $\ppt(G)=\ell-2$. For $e=\{v_{l-2}v_{l-1}\}$, we consider the graph $G_e$. Note that $\gamma_p(G_e)=3$ because $v_1, v_{\ell}\in S$ for any  minimum power dominating set $S$ and $\{v_1, v_{\ell}\}$ is not a power dominating set. For $S=\{v_1, v_{l-2}, v_{\ell}\}, \ppt(G_e,S)=\left \lceil \frac{\ell-4}{2} \right \rceil$. By choosing $\ell\geq 2t+1$, $\ppt(G_e)\leq \ppt(G)-t.$
				
			\end{proof}
			
			\begin{cor}\label{contractincrease} For any $t\geq 0$, there exists a graph $H=(V,E)$ and edge $e\in E$ such that $\ppt(H/e)\geq \ppt(H)+t.$
			\end{cor}

			\begin{proof} From Proposition \ref{subdecrease}, there exist graphs $G$ and $G_e$ such that $\ppt(G_e)\leq \ppt(G)-t.$ Let $H=G_e$ and $H/e=G$. Then $\ppt(H/e)\geq \ppt(H)+t.$
				
			\end{proof}
			
			\begin{figure}[h!]
				\begin{center}
					\begin{tikzpicture}[scale=1.0]
					
					
					\vertex [label=below:$v_1$](1) at (0,0) {};
					\vertex [label=below:$v_2$](2) at (1,0){};
					\vertex [label=right:$\ldots$](3) at (2,0) {};
					\vertex (4) at (3,0){};
					\vertex [label=below:$v_{\ell-2}$](5) at (4,0) {};
					\vertex [label=below:$v_{\ell-1}$](6) at (5,0){};
					\vertex [label=below:$v_{\ell}$](7) at (6,0){};
					\vertex (8) at (5,1){};
					\vertex (9) at (4,1){};
					\vertex (10) at (-1,0.5) {};
					\vertex (11) at (-1,-0.5) {};
					
\vertex (111) at (-1,0) {};
					\vertex (12) at (7,0.5) {};

\vertex (121) at (7,0) {};					
					\vertex (13) at (7,-0.5) {};
					
					\draw (1) to (2);
					\draw(2) to (3);
					\draw (4) to (5);
					\draw(5) to (6);
					\draw (6) to (7);
					\draw(8) to (6);
					\draw (9) to (5);
					\draw(1) to (11);
					
\draw(1) to (111);
					\draw(1) to (10);
					\draw(7) to (12);
					\draw(7) to (121);
					\draw(7) to (13);
					
					
					\vertex [label=below:$v_1$](21) at (0,-3) {};
					\vertex [label=below:$v_2$](22) at (1,-3){};
					\vertex [label=right:$\ldots$](23) at (2,-3) {};
					\vertex (24) at (3,-3){};
					\vertex [label=below:$v_{\ell-2}$](25) at (4,-3) {};
					\vertex (214) at (4.5,-3) {};
					\vertex [label=below:$v_{\ell-1}$](26) at (5,-3){};
					\vertex [label=below:$v_{\ell}$](27) at (6,-3){};
					\vertex (28) at (5,-2){};
					\vertex (29) at (4,-2){};
					\vertex (210) at (-1,-2.5) {};
					
\vertex (2100) at (-1,-3) {};
					\vertex (211) at (-1,-3.5) {};
					\vertex (212) at (7,-2.5) {};
							\vertex (213) at (7,-3.5) {};
	
\vertex (215) at (7,-3) {};				
					\draw(25) to (214);
					\draw(214) to (26);
					\draw (21) to (22);
					\draw(22) to (23);
					\draw (24) to (25);
					\draw (26) to (27);
					\draw(28) to (26);
					\draw (29) to (25);
					\draw(21) to (211);
					\draw(21) to (210);
				
\draw(21) to (2100);			

\draw(27) to (212);
					\draw(27) to (213);
					\draw(27) to (215);

					\end{tikzpicture}
					\caption{Graphs $G$ and $G_e$ in Proposition \ref{subdecrease}.}
					\label{subdecreasefig}
				\end{center}
			\end{figure}
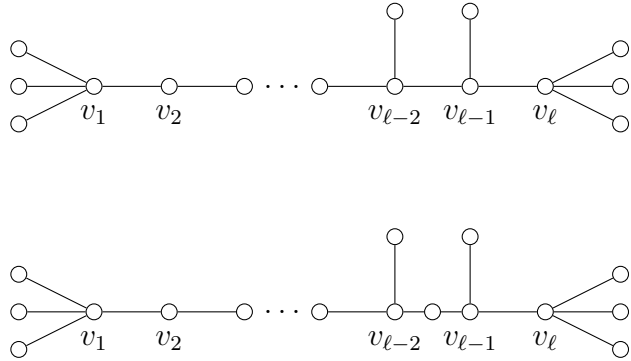
			
			Similarly, subdividing an edge can cause the power propagation time to increase by any amount, as demonstrated by the following proposition.
			
			\begin{prop}\label{subincrease} For any $t\geq 0$, there exists a graph $G=(V,E)$ and edge $e\in E$ such that $\ppt(G_e)\geq \ppt(G)+t.$
				
			\end{prop}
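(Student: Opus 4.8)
Proposition \ref{subincrease} asserts that edge subdivision can raise the power propagation time by any prescribed amount $t$. By the previous proposition (decrease case), the natural strategy is to exhibit a complementary construction: a graph $G$ with \emph{small} propagation time and a distinguished edge $e$ whose subdivision forces a \emph{long} forcing chain in $G_e$.

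The plan is to start from a base graph on which a single vertex dominates and forces everything quickly, and then subdivide an edge so that the subdivision vertex creates a long induced path that must be traversed one vertex per step. First I would take a graph $G$ in which some vertex $s$ of high degree (at least $3$) is an efficient power dominating set with $\ppt(G)$ bounded by a fixed constant, so that $\gamma_P(G)=1$ and the domination step together with a couple of propagation steps already cover $V(G)$. The cleanest realization is a spider or a star-like graph centered at $s$ with short legs: after the domination step $N[s]$ is colored, and each short leg forces out in a bounded number of steps. Concretely, I expect to use something like $G=\mathrm{sp}(1,1,\ldots,1)$ augmented so that $\Delta\ge 3$ and $\ppt(G)$ is small and independent of $t$.

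Next I would choose the edge $e$ to be an edge incident to $s$ (or to a vertex near $s$) and subdivide it $t$ or so many times---or, since the statement only allows a single subdivision per application, subdivide one carefully chosen edge whose endpoint becomes the unique entrance to a pendant path of length proportional to $t$. After subdivision, the key structural point is that in $G_e$ the new degree-$2$ vertex $w$ lies on an induced path that any forcing process must propagate along one vertex at a time. I would argue that $\gamma_P(G_e)=1$ still holds (or compute it via Observation \ref{NeighborhoodZFS}), identify the unique or best efficient power dominating set, and show that the resulting forcing chain through $w$ has length at least $\ppt(G)+t$, using the bound that at most one vertex is forced per step along an induced path (as in Propositions \ref{proppath} and \ref{Treeorderminus3}). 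The mechanism mirrors the path computation $\ppt(P_n)=\lfloor n/2\rfloor$: lengthening a leg by subdivisions lengthens the tail of a forcing chain.

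The main obstacle I anticipate is controlling $\ppt(G_e)$ from \emph{below}: I must rule out that some cleverer power dominating set of $G_e$ finishes faster by forcing the long path from both ends simultaneously, which is exactly what kept the path propagation time at only $\lfloor n/2\rfloor$ rather than $n-1$. To defeat this I would design $G$ so that the far end of the subdivided leg is "pinned"---e.g., it terminates at a strong support vertex or a degree-$2$ dead end that cannot itself belong to, or be reached early by, any efficient dominating set, so that the entire long segment must be forced from a single direction. Establishing that the far endpoint offers no shortcut, and that every minimum power dominating set is forced to enter the long leg at one end only, is the crux; once that is pinned down, choosing the number of subdivisions to be $2t$ (to absorb the floor, as in the path formula) yields $\ppt(G_e)\ge \ppt(G)+t$, completing the argument.
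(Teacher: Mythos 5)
Your central mechanism --- arrange matters so that a long chain of degree-two vertices can be forced from both ends in $G$ but from only one end in $G_e$ --- is the right one, and it is exactly what the paper's construction exploits. However, your quantitative plan has a genuine gap. First, you repeatedly fall back on performing many subdivisions (``subdivide it $t$ or so many times,'' ``choosing the number of subdivisions to be $2t$''), but the statement concerns a \emph{single} subdivision: $G_e$ has exactly one more vertex than $G$. Second, the single-subdivision version you sketch is impossible as stated: you want $\ppt(G)$ bounded by a constant independent of $t$, while $G_e$ contains an induced chain of roughly $t$ degree-two vertices with a unique entrance. Since $G$ is recovered from $G_e$ by contracting one edge at the new vertex $w$, essentially that same chain (shorter by at most one vertex) already sits inside $G$; a forcing process can enter such a chain only at its two ends (or at a power dominating vertex placed on it), so it costs at least about half its length in $G$ as well. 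Hence $\ppt(G)$ necessarily grows like $t$, contradicting your constant bound --- concretely, your spider-with-short-legs candidate gains at most $1$ under one subdivision, since a single subdivision lengthens one leg by one vertex.

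The repair is to abandon constant $\ppt(G)$ and aim instead for a \emph{relative} gain: make the single subdivision roughly double the propagation time. This is what the paper does. Take the cycle $(v_1,\dots,v_{n-4})$, attach two leaves $v_{n-3},v_{n-2}$ to $v_1$, and add a vertex $v_{n-1}$ adjacent to $v_1$ and $v_2$ carrying a leaf $v_n$. Then $\{v_1\}$ is the unique minimum power dominating set; after the domination step, forcing proceeds around the cycle simultaneously from $v_2$ and from $v_{n-4}$, so $\ppt(G)=\left\lfloor \frac{n-4}{2}\right\rfloor$. Subdividing $e=\{v_2,v_{n-1}\}$ leaves $v_2$ with two unobserved neighbors ($v_3$ and the new vertex $w$) after the domination step, so $v_2$ cannot force; propagation must travel the long way around from $v_{n-4}$, giving $\ppt(G_e)=n-4$. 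Choosing $n\geq 2t+4$ yields $\ppt(G_e)\geq \ppt(G)+t$. Your ``pinning'' idea is precisely the blocking of $v_2$ here, but note that the gain comes from halving versus not halving a propagation time that itself grows with $t$, not from jumping from constant to linear.
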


			\begin{proof}
				
				Let $G$ be a graph on $n\geq 8$ vertices constructed from the cycle $(v_1,v_2,\ldots, v_{n-4})$ by adding the edges $\{v_1,v_{n-3}\}, \{v_1,v_{n-2}\}, \{v_1,v_{n-1}\}, \{v_2,v_{n-1}\}, \text{ and } \{v_n,v_{n-1}\}$. Let $e=\{v_2,v_{n-1}\}$, and consider $G_e$. The set $\{v_1\}$ is the unique minimum power dominating set of $G$ and $\ppt(G)=\left \lfloor \frac{n-4}{2}\right \rfloor.$ The set $\{v_1\}$ is also the unique minimum power dominating set of $G_e$ and $\ppt(G_e)=n-4$. So, by choosing $n \geq 2t+4,$ $\ppt(G_e)\geq \ppt(G)+t.$ \end{proof}

			\begin{figure}[h!]
				\begin{center}
					\begin{tikzpicture}[scale=1.0]
					
					
					\vertex [label=left:$v_{n-2}$](30) at (7.5,-2) {};
					\vertex [label=left:$v_{n-3}$](40) at (6.6,-2.6) {};
					\vertex  [label=above:$v_{n-1}$] (31) at (8.3,-2) {};
					\vertex  [label=left:$v_{1}$](32) at (7.5,-3) {};
					\vertex  [label=right:$v_{2}$](33) at (8.3,-3) {};
					\vertex [label=left:$v_{n-4}$] (34) at (7,-3.5) {};
					\vertex [label=left:$v_{n-5}$](35) at (7,-4.3) {};
					\vertex  [label=right:$\ldots$](36) at (7.5,-4.6) {};
					\vertex  (37) at (8.4,-4.6) {};
					\vertex  [label=right:$v_{4}$](38) at (8.9,-4.1) {};
					\vertex  [label=right:$v_{3}$](39) at (8.9,-3.5) {};
					\vertex  [label=right:$v_{n}$](41) at (9,-2) {};
					
					\draw(30) to (32);
					\draw(40) to (32);
					\draw(31) to (32);
					\draw(31) to (41);
					\draw(31) to (33);
					\draw(32) to (34);
					\draw(34) to (35);
					\draw(35) to (36);
					\draw(32) to (33);
					\draw(37) to (38);
					\draw(38) to (39);
					\draw (39) to (33);

					
					\vertex [label=left:$v_{n-2}$](130) at (12.5,-2) {};
					\vertex [label=left:$v_{n-3}$](140) at (11.6,-2.4) {};
					\vertex  [label=above:$v_{n-1}$] (131) at (13.3,-2) {};
					\vertex  [label=left:$v_{1}$](132) at (12.5,-3) {};
					\vertex  [label=right:$v_{2}$](133) at (13.3,-3) {};
					\vertex [label=left:$v_{n-4}$] (134) at (12,-3.5) {};
					\vertex [label=left:$v_{n-5}$](135) at (12,-4.3) {};
					\vertex  [label=right:$\ldots$](136) at (12.5,-4.6) {};
					\vertex  (137) at (13.4,-4.6) {};
					\vertex  [label=right:$v_{4}$](138) at (13.9,-4.1) {};
					\vertex  [label=right:$v_{3}$](139) at (13.9,-3.5) {};
					\vertex  [label=right:$v_{n}$](141) at (14,-2) {};
					\vertex (143) at (13.3,-2.5) {};

					\draw(130) to (132);
					\draw(140) to (132);
					\draw(131) to (132);
					\draw(131) to (141);
					\draw(143) to (133);
					\draw(143) to (131);
					\draw(132) to (134);
					\draw(134) to (135);
					\draw(135) to (136);
					\draw(132) to (133);
					\draw(137) to (138);
					\draw(138) to (139);
					\draw (139) to (133);

					\end{tikzpicture}
					\caption{Graphs $G$  and $G_e$ in Proposition \ref{subincrease}.}
					\label{subincreasefig}
				\end{center}
			\end{figure}
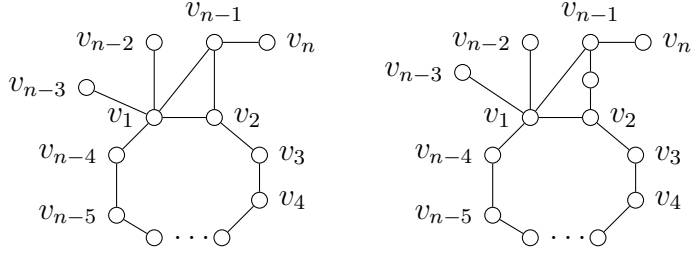

			\begin{cor}\label{contractdecrease} For any $t\geq 0$, there exists a graph $H=(V,E)$ and edge $e\in E$ such that $\ppt(H/e)\leq \ppt(H)-t.$
				
			\end{cor}
			
			\begin{proof}
				
				From Proposition \ref{subincrease}, there exist graphs $G$ and $G_e$ such that $\ppt(G_e)\geq \ppt(G)+t.$ Let $H=G$ and $H/e=G_e$. Then $\ppt(H/e)\leq \ppt(H)-t.$
				
			\end{proof}

      	\section{$k$-power propagation}\label{kprop}

		The authors of \cite{Chang} introduced the following generalization of power domination, known as {\em $k-$power domination.} Let $k\geq 1$. For a set $S\subseteq V(G)$, define the following sets:
			
			\begin{enumerate}
				\item[1.] $S^{[0]}=S, S^{[1]}=N[S].$
				\item[2.]For $t\geq 1$, $S^{[t+1]}=S^{[t]}\cup \{w\in V(G)| \hspace{1mm} \exists \hspace{1mm} v\in S^{[t]}, w\in N(v) \setminus S^{[t]}\text{ and } |N(v) \setminus S^{[t]}|\leq k\}$.
			\end{enumerate}

			\noi(For our purposes and convenience, we have defined $S^{[0]}=S$. This is not done in \cite{Chang}.) A set $S$ is said to be a {\em $k-$power dominating set} if there exists an $l$ such that $S^{[l]}=V(G).$ (Note that when $k=1$ the set is a power dominating set.) The {\em $k-$power domination number} of $G$, denoted $\gamma_{P,k}(G)$, is defined to be the minimum cardinality over all $k-$power dominating sets of $G$, and $\gamma_{P,k}(G)\leq \gamma_P(G)\leq \gamma(G)$ for all $k\geq 1$ \cite{Chang}. \\\indent  We define the $k-$power propagation time as follows:

\begin{defn}{\rm Let $S$ be a $k-$power dominating set. The {\em $k-$power propagation time of $G$ with $S$}, denoted $\ppt_k(G,S)$ is the smallest $\ell$ such that $S^{[\ell]}=V(G)$. The {\em $k-$power propagation time of $G$}, denoted $\ppt_k(G)$ is given by \[\ppt_k(G)=\min\{\ppt_k(G,S)|S \text{ is a minimum $k-$power dominating set} \}.\]}
\end{defn} 

A minimum $k-$power dominating  set $S$ of a graph $G$ is {\em efficient} if $\ppt_k(G,S)=\ppt_k(G)$. \\\indent  In this section, we study the $k-$power propagation time of a graph by characterizing graphs with extreme high and extreme low $k-$power propagation times. Note that by letting $k=1$, we obtain characterizations of graphs with extreme high and extreme low power propagation times. \\\indent The next observation and next two propositions are generalizations of Observation \ref{obs} and Propositions \ref{proppath} and \ref{propcycle}, and the same arguments hold. 
			
				\begin{obs}\label{obs2}{\rm
						
						Let $G$ be a graph on $n$ vertices and $S$ a $k-$power dominating set of $G$. Then,
						
						\begin{equation}
						\ppt_k(G,S)\leq n-|S| \\ \label{kpptbound1}
						\end{equation} and
						
						\begin{equation}
						\ppt_k(G,S)-1\leq n-|N[S]| \\ \label{kpptbound2}
						\end{equation}
					}\end{obs}
				
					\begin{prop}\label{kproppath}Let $P_n$ be the path on $n$ vertices. Then $\ppt_k(P_n)=\left \lfloor \frac{n}{2} \right \rfloor$.
					\end{prop}

					\begin{prop}\label{kpropcycle}Let $C_n$ be the cycle on $n$ vertices. Then $\ppt_k(C_n)=\left \lfloor \frac{n}{2} \right \rfloor$.						
					\end{prop}

			\begin{rem}\label{remarkk2}{\rm It is a well known fact that for  a connected graph $G$ of order at least 3, there exists an efficient $k-$power dominating set of $G$ in which every vertex has degree at least 2: For if $v$ is a leaf of an efficient $k-$power dominating set $S$ and $vw\in E(G)$, then $w$ is not a leaf since $G$ is connected and $G\neq K_2$. So, $S'=(S \setminus \{v\}) \cup \{w\}$ is a minimum $k-$power dominating set, and $\ppt_k(G, S')\leq \ppt_k(G,S)$. Repeating this process for each leaf in $S$, we obtain an efficient $k-$power dominating set of $G$ with no leaves.
					
				}
				
			\end{rem}

			\begin{lem}\label{minpowerdomwithdegthree} {\rm \cite{Chang}} Let $k\geq 1$ and let $G$ be a connected graph with $\Delta(G)\geq k+2$. Then there exists a minimum $k-$power dominating set $S$ of $G$ such that $\deg(s)\geq k+2$ for each $s\in S.$
				
			\end{lem}

			Note that $\Delta(G)\geq k+2$ does not guarantee that there exists an efficient $k-$power dominating set $S$ such that $\deg(s)\geq k+2$ for each $s\in S.$  This is demonstrated in the following example with $k=1$.

			\begin{ex}{\rm
					
					Let $G$ be the graph on $n+2$ vertices ($n\geq 5$) obtained from a path $(v_1,v_2,\ldots, v_n)$ by adding a leaf to $v_2$ and adding a leaf to $v_3$. Then $S=\{v_2,v_3\}$ is the unique power dominating set such that $\deg(s)\geq 3$ for each $s\in S,$ but for $S'=\{v_2,v_4\}$, $n-4=\ppt(G,S')< \ppt(G,S)=n-3$.
					
				}
				
			\end{ex}
			
			Throughout the rest of this paper, we also use the following generalization of Lemma \ref{minpowerdomwithdegthree}:
			
			\begin{lem}\label{deltat}{\rm
					
					For any $3\leq t \leq k+2 $, if $G$ is connected with $\Delta(G) \geq t,$ then there exists a minimum $k-$power dominating set $S$ such that every vertex in $S$ has degree at least $t$.
					
				}
				
			\end{lem}
			
			\begin{proof}
				Let $3\leq t\leq k+2$ and let $S$ be a minimum $k-$power dominating set of $G$. Suppose $s\in S$ and $\deg(s)<t$. Since $G$ is connected, we may choose $v\in V(G)$ such that $\deg(v)\geq t$  and $\deg(u)<t$ for all interior vertices $u$ on the shortest path from $s$ to $v$. Then $(S\setminus\{s\})\cup\{v\}$ is also a minimum $k$-power dominating set. Continuing this process for all vertices in $S$ with degree less than $t$, we construct a minimum  $k$-power dominating set of $G$ such that every vertex has degree at least $t$.
			\end{proof} 
		\subsection{Low $k$-power propagation time}\label{low}
		We first consider graphs with low $k-$propagation time. If $G$ is a graph with $k$-propagation time 1, then any efficient $k$-power dominating set of $G$ is also a dominating set, so $\gamma(G)\leq \gamma_{P,k}(G)$. Since it is always true that $\gamma_{P,k}(G)\leq \gamma(G),$ it follows that $\gamma_{P,k}(G)=\gamma(G).$ In this section, we study graphs with $k-$power propagation time equal to 1. \\\indent For $k\geq 1$, a vertex $v$ in $V(G)$ is called a {\em k-strong support vertex} if $v$ is adjacent to $k+1$ or more leaves. A $1-$strong support vertex is also known as a {\em strong support vertex} and was originally defined in \cite{HHHH}.

		\begin{rem}\label{powerequalsdom}{\rm
				
				Note that every $k$-strong support vertex of a graph $G$ is in every minimum dominating set of $G$. Also, if $S$ is a $k-$power dominating set of $G$ and $v$ is a $k$-strong support vertex of $G$ then either $v$ is in $S$ or all but $k$ of the leaves adjacent to $v$ are in $S$. So $\gamma_{P,k}(G)$ is at least the number of $k$-strong support vertices in $G$. Since $\gamma_{P,k}(G)\leq \gamma(G)$, it follows that if $S$ is a dominating set of $G$ such that every vertex in $S$ is a $k$-strong support vertex, then $S$ is the unique minimum dominating set of $G$, $\gamma_{P,k}(G)=\gamma(G)$, and $\ppt_k(G)=1$.
				
			}
			
		\end{rem}

		\indent For a minimum $k-$power dominating set $S$ and a vertex $v$ in $S$, the {\em private neighborhood} of $v$ with respect to $S$, denoted $pn[v,S]$, is the set $N[v]\setminus (N[S\setminus\{v\}])$. Every vertex of $pn[v,S]$ is called a {\em private neighbor} of $v$ with respect to $S$, and $A_v$ denotes the set $V\setminus(S\cup pn[v,S])$ \cite{HHHH}. 
		
		
The next theorem and proof is a generalization of  Theorem 9 given in \cite{HHHH}.

\begin{thm}\label{ppt1girth5} For $k\geq 1$, let $G$ be a connected graph on at least k+2 vertices that does not contain $C_3$ or $K_{2,k+1}$ as an induced subgraph. Then $\ppt_k(G)=1$ if and only if $G$ has a minimum dominating set $S$ such that every vertex in $S$ is a $k$-strong support vertex. Furthermore, $S$ is the unique minimum dominating set of $G$.
\end{thm}

		\begin{proof}
			
			If $G$ has a dominating set $S$ such that each vertex in $S$ is a $k-$strong support vertex, then by Remark \ref{powerequalsdom}, $\gamma_{P,k}(G)=\gamma(G)$ and $\ppt_k(G)=1$. \\\indent Conversely, let $\ppt_k(G)=1$ (i.e $\gamma_{P,k}(G)=\gamma(G)$). To obtain a contradiction, suppose $S$ is a minimum dominating set of $G$ such that there exists a vertex $v\in S$ that is not a $k-$strong support vertex. If $pn[v,S]=\emptyset,$ then $S\setminus\{v\}$ is a smaller dominating set. Suppose that $pn[v,S]=\{v\}$. Then $S\setminus\{v\}$ dominates $V\setminus\{v\}$, and since $G$ is connected, $v$ will be forced in step 1. So $S\setminus\{v\}$ is a smaller $k$-power dominating set. Thus, $pn[v,S]$ contains at least one vertex that is not $v$. \\\indent Suppose first that $pn[v,S]$ contains a vertex $w\neq v$ that is not a leaf. We show again that $S\setminus\{v\}$ is a smaller $k$-power dominating set. Since $w$ is not a leaf, it is adjacent to a vertex in $A_v$: To see this, note that $w$ has no neighbor in $pn[v,S]$ (except for $v$ if $v$ is in $pn[v,S]$) since every other vertex in $pn[v,S]$ is also adjacent to $v$ and $G$ contains no 3 cycles. Furthermore, by the definition  of $pn[v,S]$, $w$ has no neighbor in $S\setminus \{v\}$. Since $w$ is not a leaf, then $w$ is adjacent to some vertex $w_u$ in $A_v$. To see that $S\setminus\{v\}$ is a smaller $k$-power dominating set,  first note that  $w_u$ is not adjacent to $v$ (since $(v,w_u,w)$ could give a 3 cycle) and  $|N(w_u)\cap (pn[v,S]\setminus\{v\})|\leq k$ (since $G$ is $K_{2,k+1}$-free and the vertices of $N(w_u)\cap (pn[v,S]\setminus\{v\})$ form the induced graph $K_{2,t}$ where $t=|N(w_u)\cap (pn[v,S]\setminus\{v\})|$).  It follows that $S\setminus\{v\}$ is a $k-$power  dominating set of $G$ since  $S\setminus\{v\}$ dominates $A_v$ in step 1, each $w$ in $pn[v,S]\setminus\{v\}$ that is not a leaf is forced by a neighbor $w_u$ from $A_v$ step 2, if necessary any such $w$ can force $v$ in step 3, and since $v$ is adjacent to at most $k$ leaves, then $v$ will force these leaves (if any) in step 4. So each vertex in $pn[v,S]$ that is not $v$ must be a leaf.\\\indent Suppose  vertices $w_1,...,w_t$ are leaves in $pn[v,S]$,  where $1\leq t\leq k$ since $v$ is not a $k-$strong support vertex. Since $G$ is connected and each $w_i$ is only adjacent to $v$, $v$ must have a neighbor in $S\setminus \{v\}$ or in $A_v$. In either case, we show that $S\setminus \{v\}$ is a smaller $k-$power dominating set. If $v$ has a neighbor in $S\setminus \{v\}$, then $S\setminus \{v\}$ dominates $A_v\cup \{v\}$ in step 1, and $v$ will force $\{w_1,...,w_t\}$ in step 2. If $v$ has a neighbor in $A_v$ (and no neighbor in $S\setminus \{v\}$), then $S\setminus \{v\}$ dominates $A_v$ in step 1, $v$ is forced by a neighbor from $A_v$ in step 2, and $v$ forces $w_1,...,w_t$ in step 3. This completes the proof of the first statement in the theorem. Note that we have shown that if $\ppt_k(G)=1$, then every minimum dominating set of $G$ contains only $k-$strong support vertices. Thus, if $S$ is a minimum dominating set such that each vertex in $S$ is a $k-$strong support vertex, then $S$ is the unique minimum dominating set of $G$.
\end{proof}

		\subsection{High $k$-power propagation time}\label{high}
		
		Here we consider graphs with high $k-$power propagation times. First we characterize all graphs  on $n$ vertices with $\ppt_k(G)=n-1$ or $\ppt_k(G)=n-2$.

		\begin{thm}\label{orderminus1} For a graph $G$ on $n$ vertices and $k\geq 1$, $\ppt_k(G)=n-1$ if and only if $G=K_1$ or $G=K_2.$
			
		\end{thm}

		\begin{proof} Let $S$ be an efficient $k$-power dominating set of $G$. Since $\ppt_k(G)=n-1$, then $S=\{s\}$ for some $s\in V(G)$, and $G$ is connected. Note that at most 1 vertex may be forced at each step, including the domination step, so $\deg(s)\leq 1$. By Remark \ref{remarkk2}, $n\leq 2$, so $G=K_1$ or $G=K_2$.
		\end{proof}

		\begin{thm}\label{orderminus2} Let $k\geq 1$ and let $G$ be a graph on $n$ vertices with $\ppt_k(G)=n-2$. Then $G\in \{K_1\cup K_1, K_1\cup K_2, P_3, P_4, C_3, C_4\}$.
			
		\end{thm}
		
		\begin{proof}

			Since $\ppt_k(G)=n-2$, then for any minimum $k-$power dominating set $S$, $|S|\leq 2$ and $|N[S]|\leq 3$. Suppose $\Delta(G)\geq 3$ and let $G_1$ be a connected component of $G$ with $\Delta(G_1)\geq 3$. By Lemma \ref{deltat}, there exists a minimum $k-$power dominating set $S_1$ of $G_1$ such that each $s\in S_1$ has degree at least 3. Then for any minimum $k-$power dominating set $S$ of $G$ such that $S_1\subseteq S$, we have that $|N[S]|\geq 4$, contradicting $|N[S]|\leq 3$. So $\Delta(G)\leq 2$ and $G$ is the union of cycles and paths. Since $|S|\leq 2$, then $G$ has at most 2 components. If $G$ has exactly one component, $G$ is a path or a cycle, and it follows from Propositions \ref{kproppath} and \ref{kpropcycle} that $G\in \{P_3, P_4, C_3, C_4\}$. Suppose $G$ has 2 components. Since $|N[S]|\leq 3$, one component is $K_1$, and by Remark \ref{remarkk2} (or Theorem \ref{orderminus1}), the other component is $K_1$ or $K_2$.
		\end{proof}

		Next we consider graphs $G$ whose $k-$power propagation time is $n-3$. The case with $k=1$ behaves differently than the cases with $k\geq 2$, so we first consider the latter.\\\indent We use $\mathfrak{G}$ to denote the family of connected graphs $G$  on 5 vertices with $\Delta(G)=3$ (see Figure \ref{graphs5delta3}).
		
		\begin{figure}[h!]
			\begin{center}
				\begin{tikzpicture}[scale=1.0]
				\vertex (1) at (0,0) {};
				\vertex (2) at (-.5,-.5) {};
				\vertex (3) at (.5,-.5) {};
				\vertex (4) at (-.5,-1) {};
				\vertex (5) at (.5,-1) {};
				
				\draw(1) to (2);
				\draw(1) to (3);
				\draw(2) to (3);
				\draw(3) to (5);
				\draw(2) to (4);
				\draw(5) to (4);

				\vertex (6) at (2,0) {};
				\vertex (7) at (2.5,0) {};
				\vertex (8) at (2,-.5) {};
				\vertex (9) at (2.5,-.5) {};
				\vertex (10) at (2,-1) {};
				
				\draw(6) to (7);
				\draw(8) to (9);
				\draw(8) to (10);
				\draw(6) to (8);
				\draw(7) to (9);
				
				\vertex (11) at (4,0) {};
				\vertex (12) at (3.5,-.5) {};
				\vertex (13) at (4.5,-.5) {};
				\vertex (14) at (3.5,-1) {};
				\vertex (15) at (4.5,-1) {};
				
				\draw(11) to (12);
				\draw(11) to (13);
				\draw(12) to (13);
				\draw(13) to (15);
				\draw(12) to (14);
				
				\vertex (16) at (6,0) {};
				\vertex (17) at (6,-.5) {};
				\vertex (18) at (5.5,-.5) {};
				\vertex (19) at (6.5,-.5) {};
				\vertex (20) at (6,-1) {};
				
				\draw(16) to (18);
				\draw(16) to (19);
				\draw(17) to (19);
				\draw(17) to (18);
				\draw(20) to (18);
				\draw(20) to (19);

				\vertex (21) at (6,-2) {};
				\vertex (22) at (6,-2.5) {};
				\vertex (23) at (5.5,-2.5) {};
				\vertex (24) at (6.5,-2.5) {};
				\vertex (25) at (6,-3) {};
				
				\draw(21) to (23);
				\draw(21) to (22);
				\draw(21) to (24);
				\draw(22) to (24);
				\draw(22) to (23);
				\draw(25) to (23);
				\draw(25) to (24);
				
				\vertex (26) at (-.5,-2.5) {};
				\vertex (27) at (0,-2.5) {};
				\vertex (28) at (-.5,-3) {};
				\vertex (29) at (0,-3) {};
				\vertex (30) at (-.5,-2) {};
				
				\draw(26) to (27);
				\draw(27) to (28);
				\draw(28) to (29);
				\draw(29) to (27);
				\draw(28) to (26);
				\draw(26) to (30);
				
				\vertex (31) at (1.5,-1.5) {};
				\vertex (32) at (2.5,-1.5) {};
				\vertex (33) at (2,-2) {};
				\vertex (34) at (2,-2.5) {};
				\vertex (35) at (2,-3) {};

				\draw(31) to (32);
				\draw(31) to (33);
				\draw(32) to (33);
				\draw(33) to (34);
				\draw(34) to (35);
				
				\vertex (36) at (3.5,-2) {};
				\vertex (37) at (4.5,-2) {};
				\vertex (38) at (4,-2) {};
				\vertex (39) at (4,-2.5) {};
				\vertex (40) at (4,-3) {};

				\draw(36) to (38);
				\draw(37) to (38);
				\draw(38) to (39);
				\draw(39) to (40);

				\end{tikzpicture}
				
				\caption{$\mathfrak{G}$: Connected graphs $G$ on 5 vertices with $\Delta(G)=3$.}
				\label{graphs5delta3}
			\end{center}
		\end{figure}
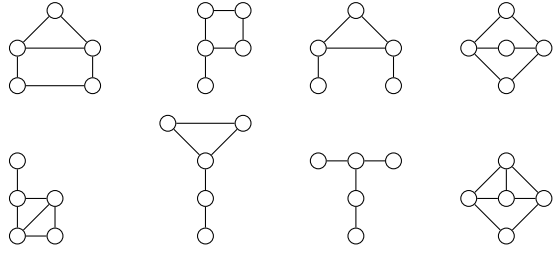
		
		\begin{thm}\label{orderminus2} Let $k\geq 2$ and let $G$ be a graph on $n$ vertices with $\ppt_k(G)=n-3$. Then $G\in \{P_5, P_6, C_5, C_6,K_{1,3}, L(3,1), K_4-e, K_4, K_1\cup P_3, K_1\cup P_4, K_1\cup C_3, K_1\cup C_4,K_2\cup K_2,\overline{K_3}, \overline{K_2} \cup K_2\}\cup \mathfrak{G}$.
			
		\end{thm}

		\begin{proof}Since $\ppt_k(G)=n-3,$ then for any minimum $k-$power dominating set $S$, $|S|\leq 3$ and $|N[S]|\leq 4.$ It follows from Lemma \ref{deltat} that $\Delta(G)\leq 3$. \\\indent If $\Delta(G)\leq 2$, then $G$ is the  union of paths and cycles. Since $|S|\leq 3$ for any minimum $k$-power dominating set $S$, $G$ has at most 3 components. If $G$ is connected, it follows from Propositions \ref{kproppath} and \ref{kpropcycle} that $G\in \{P_5, P_6, C_5, C_6\}.$  \\\indent Suppose $G$ has two connected components, $G_1$ and $G_2$, and first suppose $|G_1|\geq 3$. By applying Remark \ref{remarkk2} to $G_1$, there exists an efficient $k-$power dominating set $S$ of $G$ such that $|N_{G_1}[S]|\geq 3,$ where $N_{G_1}[S]=N[S]\cap V(G_1)$. Since $|N[S]|\leq 4$, we have $G_2=K_1,\ppt_k(G_1)=|G_1|-2,$ and it follows from Theorem \ref{orderminus2} that $G\in\{ K_1\cup P_3, K_1\cup P_4, K_1\cup C_3, K_1\cup C_4\}$. Otherwise, $|G_1|\leq 2$ and $|G_2|\leq 2$, and $G=K_2\cup K_2.$  \\\indent If $G$ has 3 connected components, it follows from $|N[S]|\leq 4$ that $G\in \{\overline{K_3}, \overline{K_2} \cup K_2\}$. \\\indent Suppose $\Delta(G)=3$. Let $S$ be a minimum $k-$power dominating set such that every vertex in $S$ has degree at least 3 ($S$ is guaranteed to exist by Lemma \ref{minpowerdomwithdegthree} ).  Since $|N[S]|\leq 4$, then $S=\{s\}$ and $N[S]=\{s, u_1, u_2, u_3\}$ for some $s, u_1, u_2, u_3\in V(G)$.  If $n=4,$ then $G\in \{K_{1,3}, L(3,1), K_4-e, K_4\}.$ \\\indent For $n>4$, we show that $n=5$: Since $|N[S]|=4$ and $\ppt_k(G)=n-3$, then after the domination step, exactly one force is performed during each step. Without loss of generality, suppose $u_1$ forces $v$ in step 2. \\\\{\em Claim 1:} For $i\in \{1, 2,3\}$, if $w\in N(u_i)$, then $w\in \{s, u_1, u_2, u_3, v\}:$ To see this, recall that $\Delta(G)=3$. So if $u_1$ has a neighbor $w$ not in $\{s, u_2, u_3, v\},$  it has exactly one such neighbor. Then $u_1$ will force $w$ and $v$ in step 2, which contradicts $\ppt_k(G)=n-3$. Similarly, if $u_i$ (for $i=2,3$) has a neighbor $w$ not in $\{s,u_1, u_2, u_3, v\}$, it has at most two such neighbors, so $u_1$ will force $v$ in step 2 and $u_i$ will force $w$ in step 2, contradicting $\ppt_k(G)=n-3.$

			{\em Claim 2:} Vertex $v$ has no neighbor not in $\{u_1, u_2, u_3\}$. To see this, suppose $v$ has a neighbor $w$ not in $\{u_1, u_2, u_3\}$. Since $\Delta(G)=3$ and $v$ is adjacent to $u_1$ by assumption, then $v$ has at most two such neighbors. Then $\{u_1\}$ is a minimum $k-$power dominating set with $\ppt_k(G,\{u_1\})\leq n-4$ since $u_1$ will dominate $\{v, s\}$ in step 1,  and if necessary, $s$ will force $\{u_2, u_3\}$ in step 2 and $v$ will force $w$ in step 2.

			Therefore, $G$ is a connected graph on 5 vertices with $\Delta(G)=3.$ Also note that all connected graphs on 5 vertices with maximum degree 3 have $\ppt_k(G)=2$  (for $k\geq 2).$ This completes the proof.
		\end{proof}

		Next we consider graphs with $\ppt(G)=n-3$. We first characterize all trees with $\ppt(G)=n-3$.  then we characterize all graphs with  $\ppt(G)=n-3$ and $\gamma_{P}(G)\in \{2,3\}$. In Figure \ref{graphsnminus3}, we provide some graphs with $\ppt(G)=n-3$ and $\gamma_{P}(G)=1$, but  characterizing all such graphs is less tractable.

		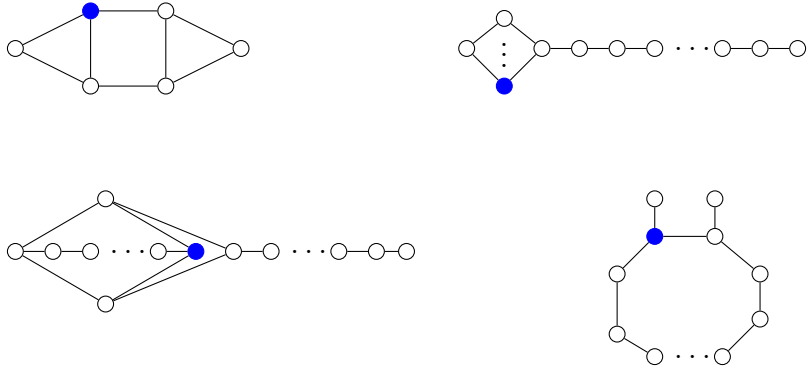
\begin{figure}[h!]
			\begin{center}
				\begin{tikzpicture}[scale=1.0]
				
				
				\bvertex (1) at (0,0) {};
				
				\vertex (2) at (1,0) {};
				
				\vertex (3) at (0,-1) {};
				
				\vertex (4) at (1,-1) {};
				
				\vertex (5) at (-1,-0.5) {};
				
				\vertex (6) at (2,-0.5) {};

				\draw(1) to (2);
				
				\draw(3) to (4);
				
				\draw(1) to (3);
				
				\draw(2) to (4);
				
				\draw(2) to (6);
				
				\draw(4) to (6);
				
				\draw(1) to (5);
				
				\draw(5) to (3);
				
				
				\vertex (7) at (5,-0.5) {};
				
				
				\vertex (9) at (6,-0.5) {};
				
\vertex (10) at (5.5,-0.1) {};
				
				\bvertex[label=above: $\vdots$] (11) at (5.5,-1) {};
				
				\vertex (12) at (6.5,-0.5) {};
				
				\vertex (13) at (7,-0.5) {};
				
				\vertex [label=right:$\ldots$](14) at (7.5,-0.5) {};
				
				\vertex (15) at (8.4,-0.5) {};
				
				\vertex (16) at (8.9,-0.5) {};
				
				\vertex (17) at (9.4,-0.5) {};

\draw(7) to (10);
				
\draw(10) to (9);
				
\draw(9) to (11);
				
\draw(11) to (7);		
				
\draw(9) to (12);
				
\draw(12) to (13);
				
\draw(13) to (14);
				
\draw(15) to (16);
				
\draw(16) to (17);
				
				

				
				\vertex  (18) at (-1,-3.2) {};
				
				\vertex  (19) at (-0.5,-3.2) {};
				
				\vertex  [label=right:$\ldots$](20) at (0,-3.2) {};
				
				\vertex  (21) at (0.9,-3.2) {};
				
				\bvertex  (22) at (1.4,-3.2) {};
				
				\vertex  (23) at (0.2,-2.5) {};
				
				\vertex  (24) at (0.2,-3.9) {};
				
				\vertex  (25) at (1.9,-3.2) {};
				
				\vertex  [label=right:$\ldots$](26) at (2.4,-3.2) {};
				
				\vertex  (27) at (3.3,-3.2) {};
				
				\vertex  (28) at (3.8,-3.2) {};
				
				\vertex  (29) at (4.2,-3.2) {};

				\draw(18) to (23);
				
				\draw(23) to (22);
				
				\draw(24) to (22);
				
				\draw(18) to (24);
				
				\draw(18) to (19);
				
				\draw(19) to (20);
				
				\draw(21) to (22);
				
				\draw(24) to (25);
				
				\draw(23) to (25);
				
				\draw(25) to (26);
				
				\draw(27) to (28);
				
				\draw(28) to (29);


				\vertex (30) at (7.5,-2.5) {};
				
				\vertex  (31) at (8.3,-2.5) {};
				
				\bvertex  (32) at (7.5,-3) {};
				
				\vertex  (33) at (8.3,-3) {};

				\vertex (34) at (7,-3.5) {};
				
				\vertex  (35) at (7,-4.3) {};
				
				\vertex  [label=right:$\ldots$](36) at (7.5,-4.6) {};
				
				\vertex  (37) at (8.4,-4.6) {};
				
				\vertex  (38) at (8.9,-4.1) {};
				
				\vertex  (39) at (8.9,-3.5) {};

				\draw(30) to (32);
				
				\draw(31) to (33);
				
				\draw(32) to (34);
				
				\draw(34) to (35);
				
				\draw(35) to (36);
				
				\draw(32) to (33);
				
				\draw(37) to (38);
				
				\draw(38) to (39);
				
				\draw (39) to (33);

				\end{tikzpicture}
				
				\caption{Graphs $G$ with $\ppt(G)=n-3$ and $\gamma_P(G)=1$. An efficient power dominating set in blue.}
				
				\label{graphsnminus3}
				
			\end{center}
		\end{figure}
		

\begin{prop}\label{Treeorderminus3} Let $T$ be a tree on $n$ vertices such that $\ppt(T)=n-3$. Then $T\in \{P_5, P_6, {\rm sp}(1,1,k)$ {\rm (for some} $k\geq 1)\}.$ 
			\end{prop}			
			
					\begin{proof} Suppose $T$ is a tree on $n$ vertices with $\ppt(T)=n-3.$ If $\Delta(T)\leq2,$ then $T$ must be a path, and by Proposition \ref{proppath}, $T=P_5$ or $T=P_6$. Suppose $\Delta(T)\geq 3$. From Lemma \ref{HHHHminpowerdomwithdegthree}, there exists a minimum power dominating set $S$ such that each vertex in $S$ has degree at least 3, so $|N[S]|\geq 4$ and $\ppt(T,S)\leq n-3.$ Since $\ppt(T)=n-3$ by assumption, then it must be the case that $\ppt(T,S)=n-3$. Thus, $|S|=1$ and $|N[S]|=4.$ \\\indent Let $S=\{s\}$ and $N[S]=\{s, u_1, u_2, u_3\}$. Note that the path $(u_i, s, u_j)$ (for $i\neq j)$ is the unique path from $u_i$ to $u_j$ (since $T$ is a tree), so the graph $T'=T-s$ has 3 connected components $T_1, T_2, T_3$ with $u_i\in T_i$. By Observation \ref{NeighborhoodZFS}, $\{u_1,u_2, u_3\}$ is a zero forcing set for $T'$, and it follows that $\{u_i\}$ is a zero forcing set of $T_i$. Since $T_i$ has zero forcing number 1, then $T_i$ is a path and $u_i$ is an endpoint of $T_i$ (\cite{row}). This gives that $T=sp(1,1,k)$ for some $k\geq 1.$ \end{proof}

		\begin{thm}\label{orderminus3} Let $G$ be a graph on $n$ vertices with $\ppt(G)=n-3$ and $\gamma_p(G)\in \{2,3\}$. Then $G\in  \{\overline{K_3}, \overline{K_2}\cup K_2, K_1\cup C_3, K_1\cup P_3, K_1\cup P_4, K_1\cup C_4, K_2\cup K_2\}$.
		\end{thm}

		\begin{proof} For any minimum power dominating set $S$ of $G$, $|S|\leq 3$ and $|N[S]|\leq 4$. Suppose $\Delta(G)\geq 3,$ and let $G_1$ be a connected component of $G$ containing a vertex of degree at least 3. By Lemma \ref{minpowerdomwithdegthree}, $G_1$ has a minimum power dominating set $S_1$ such that each $s\in S_1$ has degree at least 3. Let $S$ be a minimum power dominating of $G$ such that $S_1\subseteq S$. Since $|S|\in \{2,3\}$ and each $s\in S_1$ has degree at least 3, it follows that $|N[S]|\geq 5$, which is a contradiction. Thus, $\Delta(G)\leq 2$ and $G$ is the union of paths and cycles. Furthermore, $G$ has at least 2 connected components (since $\gamma_P(G)\neq 1$ then $G$ is not a path or cycle), and $G$ has at most 3 connected components (since $\gamma_P(G)\leq 3$).\\\indent Suppose $G$ has only two components, $G_1$ and $G_2$, so $\gamma_p(G)=2$. If $G_1$ is a path on at least 3 vertices or a cycle, then $G_2=K_1$ (since $|N[S]|\leq 4$) and $\ppt(G)=\ppt(G_1)=|G_1|-2$. By Proposition \ref{orderminus2}, $G_1 \in\{P_3, P_4, C_3, C_4\}$. Otherwise, $G=K_2\cup K_2$.\\\indent If $G$ has three components $G_1,G_2,G_3$, then $\gamma_p(G)=3$ and exactly one force is performed at each step. So, $G_2=G_3=K_1$, and $\ppt(G)=\ppt(G_1)=|G_1|-1$. By Proposition \ref{orderminus1}, $G_1\in\{K_1, K_2\}$.

		\end{proof}

		\noi {\bf Acknowledgements}\newline \noi The author would like to thank Dr. Leslie Hogben for her insight throughout this project, and Dr. Steve Butler for his assistance in the coding aspect of this project.

		

		\bibliographystyle{alpha}

	\end{document}